\author[C.~Sanna]{Carlo Sanna$^\dagger$}
\thanks{$\dagger\,$C.~Sanna is a member of GNSAGA of INdAM and of CrypTO, the group of Cryptography and Number~Theory of the Politecnico di Torino}
\address{\parbox{\linewidth}{
        Department of Mathematical Sciences, Politecnico di Torino\\
        Corso Duca degli Abruzzi 24, 10129 Torino, Italy\\[-8pt]}}
\email{carlo.sanna@polito.it}
\title[Entries of fixed-rank random matrices]{On the distribution of the entries of a\\ fixed-rank random matrix over a finite field}
\keywords{Finite field, Hamming weight, normal distribution, random matrix, rank}
\subjclass[2010]{Primary: 15B52, Secondary: 11T99, 11B33, 05A16}
\setlist[enumerate]{label=(\roman*),labelindent=1em,itemsep=0.5em,topsep=0.5em}
\newtheorem{theorem}{Theorem}[section]
\newtheorem{lemma}[theorem]{Lemma}
\theoremstyle{remark}
\newtheorem{remark}{Remark}[section]
\DeclareMathOperator{\wt}{wt}
\DeclareMathOperator{\ct}{ct}
\DeclareMathOperator{\rk}{rk}
\begin{document}

\begin{abstract}
    Let $r > 0$ be an integer, let $\mathbb{F}_q$ be a finite field of $q$ elements, and let $\mathcal{A}$ be a nonempty proper subset of $\mathbb{F}_q$.
    Moreover, let $\mathbf{M}$ be a random $m \times n$ rank-$r$ matrix over $\mathbb{F}_q$ taken with uniform distribution.
    We prove, in a precise sense, that, as $m, n \to +\infty$ and $r,q,\mathcal{A}$ are fixed, the number of entries of $\mathbf{M}$ that belong to $\mathcal{A}$ approaches a normal distribution.
\end{abstract}

\maketitle

\section{Introduction}

Let $\mathbb{F}_q$ be a finite field of $q$ element.
For every matrix $\mathbf{M}$ over $\mathbb{F}_q$, let $\wt(\mathbf{M})$ be the \emph{weight} of $\mathbb{F}_q$, that is, the number of nonzero entries of $\mathbf{M}$.

Migler, Morrison, and Ogle~\cite{MR2257016} proved a formula for the expected value of $\wt(\mathbf{M})$ when $\mathbf{M}$ is taken at random, with uniform distribution, from the set of $m \times n$ rank-$r$ matrices over $\mathbb{F}_q$.
Furthermore, they suggested that, as $m, n \to +\infty$ and $r,q$ are fixed, an appropriate scaling of $\wt(\mathbf{M})$ approaches a normal distribution.
Sanna~\cite{MR4531531} proved this last claim for $q = 2$ and assuming that $m / n$ converges to a positive real number.

For every $\mathcal{A} \subseteq \mathbb{F}_q$ and for every matrix $\mathbf{M}$ over $\mathbb{F}_q$, let $\ct_{\mathcal{A}}(\mathbf{M})$ be the number of entries of $\mathbf{M}$ that belong to $\mathcal{A}$.
Moreover, put $\gamma_{\mathcal{A}}(q) := \sum_{a \in \mathcal{A}} \gamma_a(q)$, where $\gamma_0(q) := q^{-1} - 1$ and $\gamma_a(q) := q^{-1}$ for each $a \in \mathbb{F}_q^*$, and let
\begin{align*}
    \mu_{\mathcal{A}}(q, m, n) &:= \big(|\mathcal{A}|q^{-1} - \gamma_{\mathcal{A}}(q) q^{-r}\big)mn , \\
    \sigma_{\mathcal{A}}^2(q, m, n) &:= \gamma_{\mathcal{A}}(q)^2 q^{-r}(1 - q^{-r})(m + n)mn ,
\end{align*}
for all integers $m, n > 0$.
Note that $\gamma_{\mathcal{A}}(q) \neq 0$ unless $\mathcal{A} = \varnothing$ or $\mathcal{A} = \mathbb{F}_q$.

Our result is the following.

\begin{theorem}\label{thm:main}
    Fix an integer $r > 0$ and a nonempty set $\mathcal{A} \subsetneq \mathbb{F}_q$.
    Let $\mathbf{M}$ be taken at random, with uniform distribution, from the set of $m \times n$ rank-$r$ matrices over $\mathbb{F}_q$.
    Then, as $m, n \to +\infty$, we have that
    \begin{equation}\label{equ:main}
        \frac{\raisebox{2pt}{$\ct_{\mathcal{A}}(\mathbf{M}) - \mu_{\mathcal{A}}(q, m, n)$}}{\raisebox{-6pt}{$\sqrt{\strut\sigma_{\mathcal{A}}^2(q, m, n)}$}}
    \end{equation}
    converges in distribution to a standard normal random variable.
\end{theorem}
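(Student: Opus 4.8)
The plan is to reduce the rank-$r$ model to a model of independent vectors and then to run a Hoeffding-type projection. First I would factor $\mathbf{M} = \mathbf{U}\mathbf{V}$, where $\mathbf{U}$ is $m\times r$ of full column rank and $\mathbf{V}$ is $r\times n$ of full row rank. Since every rank-$r$ matrix admits exactly $|\mathrm{GL}_r(\mathbb{F}_q)|$ such factorizations, pushing the uniform measure on full-rank pairs $(\mathbf{U},\mathbf{V})$ forward under multiplication yields the uniform measure on rank-$r$ matrices; hence $\ct_{\mathcal{A}}(\mathbf{M})$ has the same law as $\ct_{\mathcal{A}}(\mathbf{U}\mathbf{V})$. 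Writing $\mathbf{u}_1,\dots,\mathbf{u}_m\in\mathbb{F}_q^r$ for the rows of $\mathbf{U}$ and $\mathbf{v}_1,\dots,\mathbf{v}_n\in\mathbb{F}_q^r$ for the columns of $\mathbf{V}$, the entries become $M_{ij} = \langle \mathbf{u}_i,\mathbf{v}_j\rangle$, so that $\ct_{\mathcal{A}}(\mathbf{M}) = \sum_{i,j}\mathbbm{1}[\langle\mathbf{u}_i,\mathbf{v}_j\rangle\in\mathcal{A}]$. The key point is that a uniform full-rank matrix is exactly a matrix with i.i.d.\ uniform entries \emph{conditioned} on having full rank, and that the corresponding event $E$ (both $\mathbf{U}$ and $\mathbf{V}$ full rank) has probability $\prod_{k=0}^{r-1}(1-q^{k-m})\prod_{k=0}^{r-1}(1-q^{k-n})\to 1$ with $r,q$ fixed. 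Letting $W$ denote the normalized statistic in~\eqref{equ:main}, for any bounded continuous $f$ one has $\mathbb{E}[f(W)\mid E] = \big(\mathbb{E}[f(W)] - \mathbb{E}[f(W)\mathbbm{1}_{E^c}]\big)/\mathbb{P}[E]$, and the correction is $O(\mathbb{P}[E^c])\to 0$; thus it suffices to establish the convergence in the \emph{unconstrained} model, where $\mathbf{u}_1,\dots,\mathbf{u}_m,\mathbf{v}_1,\dots,\mathbf{v}_n$ are i.i.d.\ uniform on $\mathbb{F}_q^r$.

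In the unconstrained model I would apply the ANOVA (Hoeffding) decomposition $\mathbbm{1}[\langle\mathbf{u}_i,\mathbf{v}_j\rangle\in\mathcal{A}] = \theta + g(\mathbf{u}_i) + h(\mathbf{v}_j) + R_{ij}$, where $\theta=\mathbb{E}\,\mathbbm{1}[\langle\mathbf{u}_i,\mathbf{v}_j\rangle\in\mathcal{A}]$, the term $g(\mathbf{u}_i)=\mathbb{E}[\,\cdot\mid\mathbf{u}_i]-\theta$ is the row effect, $h(\mathbf{v}_j)$ the column effect, and $R_{ij}$ the residual. A direct conditional computation---using that $\langle\mathbf{u}_i,\mathbf{v}_j\rangle$ is uniform on $\mathbb{F}_q$ whenever $\mathbf{u}_i\neq\mathbf{0}$---gives $\theta=|\mathcal{A}|q^{-1}-\gamma_{\mathcal{A}}(q)q^{-r}$, so that $mn\theta=\mu_{\mathcal{A}}(q,m,n)$, together with the clean identities $g(\mathbf{u}_i)=\gamma_{\mathcal{A}}(q)\big(q^{-r}-\mathbbm{1}[\mathbf{u}_i=\mathbf{0}]\big)$ and $h(\mathbf{v}_j)=\gamma_{\mathcal{A}}(q)\big(q^{-r}-\mathbbm{1}[\mathbf{v}_j=\mathbf{0}]\big)$. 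In other words, to leading order the fluctuations of $\ct_{\mathcal{A}}(\mathbf{M})$ are driven entirely by the numbers of zero rows of $\mathbf{U}$ and zero columns of $\mathbf{V}$. Summing the decomposition yields
\[
    \ct_{\mathcal{A}}(\mathbf{M}) - \mu_{\mathcal{A}}(q,m,n) = n\sum_{i=1}^{m} g(\mathbf{u}_i) + m\sum_{j=1}^{n} h(\mathbf{v}_j) + \sum_{i,j} R_{ij} .
\]

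It remains to show that the first two sums govern the limit and the residual is negligible. Because $\mathbb{E}[R_{ij}\mid\mathbf{u}_i]=\mathbb{E}[R_{ij}\mid\mathbf{v}_j]=0$, two residuals are uncorrelated unless $(i,j)=(i',j')$, whence $\mathrm{Var}\big(\sum_{i,j}R_{ij}\big)=\sum_{i,j}\mathrm{Var}(R_{ij})=O(mn)$; since $\sigma_{\mathcal{A}}^2(q,m,n)$ has order $mn(m+n)$, the residual divided by $\sqrt{\sigma_{\mathcal{A}}^2(q,m,n)}$ vanishes in $L^2$, hence in probability. The main term $S:=n\sum_i g(\mathbf{u}_i)+m\sum_j h(\mathbf{v}_j)$ is a sum of $m+n$ independent, bounded, mean-zero random variables, and the same conditional variance computation shows its variance is exactly $\gamma_{\mathcal{A}}(q)^2q^{-r}(1-q^{-r})mn(m+n)=\sigma_{\mathcal{A}}^2(q,m,n)$. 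Its third absolute moments sum to $O(mn^3+m^3n)$, so the Lyapunov ratio is $O\big((m^2+n^2)/((mn)^{1/2}(m+n)^{3/2})\big)=O\big((1/m+1/n)^{1/2}\big)\to 0$; the Lyapunov central limit theorem gives $S/\sqrt{\sigma_{\mathcal{A}}^2(q,m,n)}\Rightarrow N(0,1)$. Slutsky's theorem, followed by the conditioning reduction of the first paragraph, completes the proof.

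The main obstacle is not any single estimate but the bookkeeping that turns this heuristic into a rigorous argument: one must verify that all ANOVA cross-covariances vanish (which is what forces the residual variance to be of the genuinely lower order $mn$ rather than $mn(m+n)$), and that the factorization-plus-conditioning step faithfully preserves the prescribed centering $\mu_{\mathcal{A}}$ and scaling $\sigma_{\mathcal{A}}^2$. A pleasant feature of this route is that the Lyapunov ratio tends to $0$ for \emph{arbitrary} $m,n\to+\infty$, with no constraint on $m/n$, which is precisely what is needed to remove the ratio hypothesis present in the earlier $q=2$ case. Finally, the hypothesis $\mathcal{A}\neq\varnothing,\mathbb{F}_q$ enters exactly through $\gamma_{\mathcal{A}}(q)\neq 0$, which keeps $\sigma_{\mathcal{A}}^2$ of full order $mn(m+n)$ and so prevents the limiting law from degenerating.
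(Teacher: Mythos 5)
Your proof is correct, and its core runs on a genuinely different engine than the paper's. The reduction to independent matrices is essentially shared: the paper's Lemma~\ref{lem:same-as-XY} bounds a total-variation distance between the law of $\mathbf{X}\mathbf{Y}$ (uniform, \emph{unconditioned}) and the uniform law on rank-$r$ matrices using the count \eqref{equ:rank-count}, whereas you condition on the full-rank event $E$ and use $\mathbb{P}[E]\to 1$; both rest on full-rank factorization (Lemma~\ref{lem:full-rank-factorization}) and are interchangeable. The divergence is in how the main-term/error decomposition is produced. The paper expands $\mathbbm{1}\big[\sum_k x_{i,k}y_{k,j}=a\big]$ in multiplicative characters, which forces the M\"obius--Fourier apparatus of Section~\ref{sec:prelim-fourier} (multiplicative characters vanish at $0$), evaluates the relevant Fourier coefficients (Lemma~\ref{lem:fourier-transform-of-fz-S}), and arrives at \eqref{equ:main1}, whose main term $-\gamma_{\mathcal{A}}(q)\,n\big(Z-\mathbb{E}[Z]\big)-\gamma_{\mathcal{A}}(q)\,m\big(W-\mathbb{E}[W]\big)$ is driven by the numbers $Z,W$ of zero rows of $\mathbf{X}$ and zero columns of $\mathbf{Y}$. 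Your Hoeffding/ANOVA projection lands on exactly the same main term---your $n\sum_i g(\mathbf{u}_i)$ equals $-\gamma_{\mathcal{A}}(q)\,n\big(Z-\mathbb{E}[Z]\big)$, and your formulas for $\theta$, $g$, $h$ check out against $\mu_{\mathcal{A}}$ and $\sigma^2_{\mathcal{A}}$---but with no character theory at all. The error analyses then differ in kind: the paper writes the error as a double character sum with a bounded number of terms (this is where $r$ fixed is used), each $O\big((m+n)^{-1/2}\big)$ times a product of two sequences convergent in distribution by the complex CLT, and kills them with Slutsky; you instead use the orthogonality $\mathbb{E}[R_{ij}\mid\mathbf{u}_i]=\mathbb{E}[R_{ij}\mid\mathbf{v}_j]=0$ to get pairwise uncorrelated residuals, hence a residual sum of variance $O(mn)=o\big(\sigma_{\mathcal{A}}^2(q,m,n)\big)$ and convergence to $0$ in $L^2$---a stronger, quantitative statement. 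For the main term, the paper applies the CLT to the binomials $Z,W$ separately and recombines via Lemma~\ref{lem:linear-comb-of-indep-normals} and Slutsky, while you apply the Lyapunov CLT once to the sum of $m+n$ independent bounded summands; your bound $O\big(\sqrt{1/m+1/n}\big)$ on the Lyapunov ratio is correct, and both arguments indeed need no hypothesis on $m/n$. What your route buys: it is more elementary and self-contained, and its explicit bounds track the dependence on $r$, so it should extend to $r\to+\infty$ provided, say, $q^r=o\big(\!\min(m,n)\big)$---precisely the direction contemplated in Remark~\ref{rmk:r-to-oo}, where the paper's fixed-number-of-terms argument breaks down. What the paper's route buys: the character expansion exhibits the error as structured sums indexed by characters, which is the natural starting point if one wants finer, term-by-term information rather than a single variance bound.
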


Roughly speaking, Theorem~\ref{thm:main} asserts that, as $m$ and $n$ both grow, $\ct_{\mathcal{A}}(\mathbf{M})$ approaches a normal random variable with expected value $\mu_{\mathcal{A}}(q, m, n)$ and variance $\sigma_{\mathcal{A}}^2(q, m, n)$.
Note that, if the condition on the rank is dropped, that is, if $\mathbf{M}$ is taken at random with uniform distribution from the set of $m \times n$ matrices over $\mathbb{F}_q$, then an easy application of the central limit theorem yields that $\ct_{\mathcal{A}}(\mathbf{M})$ approaches a normal random variable with expected value $|\mathcal{A}|q^{-1} mn$ and variance $|\mathcal{A}|q^{-1}\left(1 - |\mathcal{A}|q^{-1}\right)mn$.

Before we proceed, let us outline the main ideas of the proof of Theorem~\ref{thm:main}.
First, using full-rank factorization and the well-known formula for the number of $m \times n$ rank-$r$ matrices over $\mathbb{F}_q$, it is shown that, for the sake of proving Theorem~\ref{thm:main}, we can assume that $\mathbf{M} = \mathbf{X} \mathbf{Y}$,
where $\mathbf{X}$ and $\mathbf{Y}$ are $m \times r$ and $r \times n$ independent random matrices taken with uniform distribution from their respective spaces.
Second, the event that the product of a row of $\mathbf{X}$ and a column of $\mathbf{Y}$ is equal to a prescribed element of $\mathbb{F}_q$ is handled via the Fourier transform of $\mathbb{F}_q$ respect to multiplicative characters.
The use of multiplicative characters is necessary to conveniently ``separate'' the entries of $\mathbf{X}$ by the entries of $\mathbf{Y}$ in two factors of a product.
However, it introduces some complications (essentially because the Fourier inversion formula holds only for functions $\mathbb{F}_q^t \to \mathbb{C}$ that are supported on $(\mathbb{F}_q^*)^t$), which are dealt with by a kind of M\"obius transform.
Finally, all of this makes possible to write \eqref{equ:main} as a main term, which converges in distribution to a standard normal random variable, plus an error term, which is shown to be negligible.

It might be interesting to strenghten Theorem~\ref{thm:main} by letting also $r$ goes to infinity, but in a way controlled by $m$ and $n$ (see Remark~\ref{rmk:r-to-oo}).

\section{General notations and definitions}

For every finite set $\mathcal{A}$, we let $|\mathcal{A}|$ be the number of elements of $\mathcal{A}$.
For each statement $S$, we let $\mathbbm{1}[S]$ be equal to $1$ if $S$ is true, and to $0$ if $S$ is false.
For every event $E$, we let $\mathbb{P}[E]$ be the probability that $E$ occurs.
For each real or complex random variable $X$, we write $\mathbb{E}[X]$ and $\mathbb{V}[X]$ for the expected value and the variance of $X$.
For every sequence $(X_n)$ of random variables, we write $X_n \xrightarrow{d} X$ to denote that $(X_n)$ converges in distribution to $X$.
For a complex random variable $Z = X + \bm{i}Y$, where $X$ and $Y$ are real random variables and $\bm{i}$ is the imaginary unity, the \emph{covariance matrix} of $Z$ is the covariance matrix of the random vector $(X, Y)$.
Also, we say that $Z$ is a \emph{complex normal random variable} if the random vector $(X, Y)$ follows a bivariate normal distribution.
For each integer $r > 0$, we set $[r] := \{1, \dots, r\}$.
We say that a function $f \colon \mathcal{X} \to \mathbb{C}$ is \emph{supported} on a set $\mathcal{Y}$ if $f(x) = 0$ for every $x \in \mathcal{X} \setminus \mathcal{Y}$.
We adopt the usual convention that the empty sum and the empty product are equal to $0$ and $1$, respectively.

\section{Preliminaries on the Fourier transform}\label{sec:prelim-fourier}

\subsection{Characters of finite fields}\label{sec:characters}

We recall some basics facts about characters of finite fields~(see, e.g., \cite[Chapter 5, Section~1]{MR1294139} and \cite[Chapter~10, Section~1]{MR3087321}).

Given a finite abelian group $G$, a \emph{character} of $G$ is a group homomorphism $G \to \mathbb{C}^*$.
The set of characters of $G$ is denoted by $\widehat{G}$ and is a finite abelian group respect to the pointwise product of functions.
The identity of $\widehat{G}$ is the \emph{trivial character}, which sends each element of $G$ to~$1$, while the inverse of each $\chi \in \widehat{G}$ is the pointwise complex conjugation of $\chi$, which is denoted~by~$\overline{\chi}$.

The \emph{additive characters} of $\mathbb{F}_q$ are the characters of $\mathbb{F}_q$ as an additive group.
We let $\psi_0$ denote the trivial additive character of $\mathbb{F}_q$.
The \emph{multiplicative characters} of $\mathbb{F}_q$ are the characters of $\mathbb{F}_q^*$ as a multiplicative group.
We let $\chi_0$ denote the trivial multiplicative character of $\mathbb{F}_q$.
Moreover, we extend each multiplicative character $\chi$ of $\mathbb{F}_q$ to a function $\mathbb{F}_q \to \mathbb{C}$ by setting $\chi(0) := 0$.

The additive and multiplicative characters of $\mathbb{F}_q$ satisfy the orthogonality relations:

\noindent
\begin{minipage}[l]{0.5\textwidth}
    \begin{equation}\label{equ:ortho-a-sum-psi}
        \frac1{q} \sum_{\psi \in \widehat{\mathbb{F}_q}} \psi(a) = \mathbbm{1}[a = 0]
    \end{equation}
\end{minipage}%
\begin{minipage}[r]{0.5\textwidth}
    \begin{equation}\label{equ:ortho-a-sum-chi}
        \frac1{q - 1} \sum_{\chi \in \widehat{\mathbb{F}_q^*}} \chi(a) = \mathbbm{1}[a = 1]
    \end{equation}
\end{minipage}
\vspace{0.5em}

\noindent
for every $a \in \mathbb{F}_q$, and

\noindent
\begin{minipage}[l]{0.5\textwidth}
    \begin{equation}\label{equ:ortho-psi-sum-a}
        \frac1{q} \sum_{a \in \mathbb{F}_q} \psi(a) = \mathbbm{1}[\psi = \psi_0]
    \end{equation}
\end{minipage}%
\begin{minipage}[r]{0.5\textwidth}
    \begin{equation}\label{equ:ortho-chi-sum-a}
        \frac1{q - 1} \sum_{a \in \mathbb{F}_q} \chi(a) = \mathbbm{1}[\chi = \chi_0]
    \end{equation}
\end{minipage}
\vspace{0.5em}

\noindent
for every $\psi \in \widehat{\mathbb{F}_q}$ and $\chi \in \widehat{\mathbb{F}_q^*}$.

For every function $f \colon \mathbb{F}_q^t \to \mathbb{C}$ that is supported on $(\mathbb{F}_q^*)^t$, the \emph{Fourier transform} of $f$ is the function $\widehat{f} \colon \widehat{\mathbb{F}_q}^t \to \mathbb{C}$ defined by
\begin{equation}\label{equ:def-fourier-transform}
    \widehat{f}(\chi_1, \dots, \chi_t) := \frac1{(q-1)^t}\sum_{a_1, \dots, a_t \in \mathbb{F}_q} f(a_1, \dots, a_t) \overline{\chi_1}(a_1) \cdots \overline{\chi_t}(a_t)
\end{equation}
for every $\chi_1, \dots, \chi_t \in \widehat{\mathbb{F}_q^*}$.\footnote{We normalize the Fourier transform by the factor $(q-1)^{-t}$ because later this simplifies some formulas.}
From the orthogonality relation \eqref{equ:ortho-a-sum-chi}, it easily follows that
\begin{equation}\label{equ:fourier-inversion}
    f(a_1, \dots, a_t) = \sum_{\chi_1, \dots, \chi_t \in \widehat{\mathbb{F}_q^*}} \widehat{f}(\chi_1, \cdots, \chi_t) \chi_1(a_1) \cdots \chi_t(a_t)
\end{equation}
for every $a_1, \dots, a_t \in \mathbb{F}_q$, which is the \emph{Fourier inversion formula}.

\subsection{M\"obius transform}\label{sec:mobius-transform}

We need a kind of \emph{M\"obius transform} and its corresponding inversion formula, which is essentially a consequence of the inclusion-exclusion principle (see, e.g., \cite[Example~3.8.3]{MR2868112}).

First, note that from the binomial theorem it easily follows that
\begin{equation}\label{equ:sum-over-subsets}
    \sum_{\mathcal{A} \subseteq \mathcal{B}} (-1)^{|\mathcal{A}|} = \mathbbm{1}\!\big[\mathcal{B} = \varnothing\big] ,
\end{equation}
for every finite set $\mathcal{B}$.

Throughout the rest of Section~\ref{sec:prelim-fourier}, let $r > 0$ be a fixed integer.
For every $\mathcal{S} \subseteq [r]$, we write $a_{\mathcal{S}}$ to denote the $|\mathcal{S}|$-tuple $(a_{k_1}, \dots, a_{k_{|\mathcal{S}|}})$, where $k_1 < \cdots < k_{|\mathcal{S}|}$ are the elements of $\mathcal{S}$.
(If $\mathcal{S}$ is empty, then $a_{\mathcal{S}}$ is the empty tuple).
Moreover, we write $a_{(\mathcal{S})}$ to denote the $r$-tuple $(b_1, \dots, b_r)$, where $b_k := 0$ if $k \notin \mathcal{S}$, and $b_k := a_k$ if $k \in \mathcal{S}$.

For every function $f \colon \mathbb{F}_q^r \to \mathbb{C}$ and for every $\mathcal{S} \subseteq [r]$, we define the function $f_\mathcal{S} \colon \mathbb{F}_q^{|\mathcal{S}|} \to \mathbb{C}$ by
\begin{equation}\label{equ:def-f-S}
    f_{\mathcal{S}}(a_{\mathcal{S}}) := \sum_{\mathcal{T} \subseteq \mathcal{S}} (-1)^{|\mathcal{S} \setminus \mathcal{T}|} f(a_{(\mathcal{T})}) ,
\end{equation}
for every $a_{\mathcal{S}} \in \mathbb{F}_q^{|\mathcal{S}|}$.

\begin{lemma}\label{lem:mobius-inversion}
    Let $f \colon \mathbb{F}_q^r \to \mathbb{C}$.
    Then, for every $\mathcal{S} \subseteq [r]$, the function $f_{\mathcal{S}}$ is supported on $(\mathbb{F}_q^*)^{|\mathcal{S}|}$.
    Moreover, we have that
    \begin{equation}\label{equ:mobius-inversion}
        f(a_1, \dots, a_r) = \sum_{\mathcal{S} \subseteq [r]} f_{\mathcal{S}}(a_{\mathcal{S}})
    \end{equation}
    for every $a_1, \dots, a_r \in \mathbb{F}_q$.
\end{lemma}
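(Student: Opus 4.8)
The plan is to prove the two assertions separately, each by an elementary combinatorial manipulation resting on the inclusion--exclusion identity \eqref{equ:sum-over-subsets}.

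To establish that $f_{\mathcal{S}}$ is supported on $(\mathbb{F}_q^*)^{|\mathcal{S}|}$, I would fix $\mathcal{S} \subseteq [r]$ together with an index $j \in \mathcal{S}$ for which $a_j = 0$, and show directly that $f_{\mathcal{S}}(a_{\mathcal{S}}) = 0$. The crucial observation is that, when $a_j = 0$, the $r$-tuple $a_{(\mathcal{T})}$ is insensitive to whether $j$ lies in $\mathcal{T}$: its $j$-th coordinate equals $0$ in both cases, while the remaining coordinates are unchanged. Consequently $f(a_{(\mathcal{T})}) = f(a_{(\mathcal{T} \cup \{j\})})$ for every $\mathcal{T} \subseteq \mathcal{S} \setminus \{j\}$. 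I would then group the terms of \eqref{equ:def-f-S} into pairs $\{\mathcal{T}, \mathcal{T} \cup \{j\}\}$ with $j \notin \mathcal{T}$; since $j \in \mathcal{S}$, the exponents $|\mathcal{S} \setminus \mathcal{T}|$ and $|\mathcal{S} \setminus (\mathcal{T} \cup \{j\})|$ differ by one, so the two terms in each pair share the same $f$-value but carry opposite signs and cancel. Hence the entire sum vanishes.

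For the inversion formula \eqref{equ:mobius-inversion}, I would substitute the definition \eqref{equ:def-f-S} into its right-hand side and swap the order of summation, regrouping by the inner index $\mathcal{T}$:
\[
    \sum_{\mathcal{S} \subseteq [r]} f_{\mathcal{S}}(a_{\mathcal{S}}) = \sum_{\mathcal{T} \subseteq [r]} f(a_{(\mathcal{T})}) \sum_{\mathcal{T} \subseteq \mathcal{S} \subseteq [r]} (-1)^{|\mathcal{S} \setminus \mathcal{T}|} .
\]
Putting $\mathcal{U} := \mathcal{S} \setminus \mathcal{T}$, the inner sum becomes $\sum_{\mathcal{U} \subseteq [r] \setminus \mathcal{T}} (-1)^{|\mathcal{U}|}$, which by \eqref{equ:sum-over-subsets} equals $\mathbbm{1}\big[[r] \setminus \mathcal{T} = \varnothing\big] = \mathbbm{1}[\mathcal{T} = [r]]$. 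Only the term $\mathcal{T} = [r]$ therefore survives, and since $a_{([r])} = (a_1, \dots, a_r)$, the right-hand side collapses to $f(a_1, \dots, a_r)$, which is exactly the claim.

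I do not anticipate any real difficulty here, as both arguments are purely formal. The one place demanding care is the bookkeeping around the notation $a_{(\mathcal{T})}$, and in particular the verification of its insensitivity to a zero-valued index, since this is precisely what drives the cancellation in the support claim.
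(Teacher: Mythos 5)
Your proposal is correct and follows essentially the same route as the paper: the support claim via cancellation in pairs $\{\mathcal{T}, \mathcal{T} \cup \{j\}\}$ (the paper phrases this as splitting the sum over $\mathcal{T} \not\ni k_0$ and $\mathcal{T} \ni k_0$, which is the same cancellation), and the inversion formula by swapping the order of summation and applying \eqref{equ:sum-over-subsets} to the inner sum. No gaps; the key observation that $a_{(\mathcal{T})} = a_{(\mathcal{T} \cup \{j\})}$ when $a_j = 0$ is exactly what the paper's argument rests on as well.
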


\begin{proof}
    First, let us prove that for every $\mathcal{S} \subseteq [r]$ the function $f_\mathcal{S}$ is supported on $(\mathbb{F}_q^*)^{|\mathcal{S}|}$.
    Pick any $a_{\mathcal{S}} \in \mathbb{F}_q^{|\mathcal{S}|} \setminus (\mathbb{F}_q^*)^{|\mathcal{S}|}$.
    Hence, there exists $k_0 \in \mathcal{S}$ such that $a_{k_0} = 0$.
    Therefore, by~\eqref{equ:def-f-S} we have that
    \begin{align*}
        f_{\mathcal{S}}(a_{\mathcal{S}}) &= \sum_{\mathcal{T} \subseteq \mathcal{S} \setminus \{k_0\}} (-1)^{|\mathcal{S} \setminus \mathcal{T}|} f(a_{(\mathcal{T})}) + \sum_{\{k_0\} \subseteq \mathcal{T} \subseteq \mathcal{S}} (-1)^{|\mathcal{S} \setminus \mathcal{T}|} f(a_{(\mathcal{T})}) \\
        &= \sum_{\mathcal{T} \subseteq \mathcal{S} \setminus \{k_0\}} (-1)^{|\mathcal{S} \setminus \mathcal{T}|} f(a_{(\mathcal{T})}) - \sum_{\mathcal{T}^\prime \subseteq \mathcal{S} \setminus \{k_0\}} (-1)^{|\mathcal{S} \setminus \mathcal{T}^\prime|} f(a_{(\mathcal{T}^\prime)}) = 0 ,
    \end{align*}
    where we used the fact that each set $\mathcal{T}$ satisfying $\{k_0\} \subseteq \mathcal{T} \subseteq \mathcal{S}$ can be written in a unique way as $\mathcal{T} = \mathcal{T}^\prime \cup \{k_0\}$ with $\mathcal{T}^\prime \subseteq \mathcal{S} \setminus \{k_0\}$.
    The claim is proven.

    Let us prove~\eqref{equ:mobius-inversion}.
    From~\eqref{equ:def-f-S} and~\eqref{equ:sum-over-subsets}, we get that
    \begin{align*}
        \sum_{\mathcal{S} \subseteq [r]} f_{\mathcal{S}}(a_{\mathcal{S}})
        &= \sum_{\mathcal{S} \subseteq [r]} \sum_{\mathcal{T} \subseteq\mathcal{S}} (-1)^{|\mathcal{S} \setminus \mathcal{T}|} f(a_{(\mathcal{T})})
        = \sum_{\mathcal{T} \subseteq [r]} f(a_{(\mathcal{T})}) \sum_{\mathcal{T} \subseteq \mathcal{S} \subseteq [r]} (-1)^{|\mathcal{S} \setminus \mathcal{T}|} \\
        &= \sum_{\mathcal{T} \subseteq [r]} f(a_{(\mathcal{T})}) \sum_{\mathcal{S}^\prime \subseteq [r] \setminus \mathcal{T}} (-1)^{|\mathcal{S}^\prime|} = f(a_1, \dots, a_r) ,
    \end{align*}
    where we wrote $\mathcal{S} = \mathcal{S}^\prime \cup \mathcal{T}$.
    The proof is complete.
\end{proof}

\subsection{M\"obius--Fourier inversion formula}

We can combine the results of Sections~\ref{sec:characters} and~\ref{sec:mobius-transform} to obtain a \emph{M\"obius--Fourier inversion formula}.

\begin{lemma}\label{lem:mobius-fourier-inversion}
    Let $f \colon \mathbb{F}_q^r \to \mathbb{C}$.
    Then we have that
    \begin{equation*}
        f(a_1, \dots, a_r) = \sum_{\mathcal{S} \subseteq [r]} \sum_{\chi_{\mathcal{S}} \in \widehat{\mathbb{F}_q^*}^{|\mathcal{S}|}} \widehat{f_{\mathcal{S}}} (\chi_{\mathcal{S}}) \prod_{k \in \mathcal{S}} \chi_k (a_k) ,
    \end{equation*}
    for every $a_1, \dots, a_r \in \mathbb{F}_q$.
\end{lemma}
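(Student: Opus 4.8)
The plan is to chain together the two inversion formulas already established: the Möbius inversion of Lemma~\ref{lem:mobius-inversion} and the Fourier inversion formula~\eqref{equ:fourier-inversion}. This is a direct combination, so the argument is short.

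First I would fix $a_1, \dots, a_r \in \mathbb{F}_q$ and apply Lemma~\ref{lem:mobius-inversion} to write
\[
    f(a_1, \dots, a_r) = \sum_{\mathcal{S} \subseteq [r]} f_{\mathcal{S}}(a_{\mathcal{S}}) .
\]
The decisive observation is that the same lemma guarantees that, for each $\mathcal{S} \subseteq [r]$, the function $f_{\mathcal{S}} \colon \mathbb{F}_q^{|\mathcal{S}|} \to \mathbb{C}$ is supported on $(\mathbb{F}_q^*)^{|\mathcal{S}|}$. This is precisely the hypothesis under which the Fourier transform~\eqref{equ:def-fourier-transform} and its inversion formula~\eqref{equ:fourier-inversion} are defined, now applied with $t = |\mathcal{S}|$.

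Next I would apply~\eqref{equ:fourier-inversion} to each summand $f_{\mathcal{S}}(a_{\mathcal{S}})$, obtaining
\[
    f_{\mathcal{S}}(a_{\mathcal{S}}) = \sum_{\chi_{\mathcal{S}} \in \widehat{\mathbb{F}_q^*}^{|\mathcal{S}|}} \widehat{f_{\mathcal{S}}}(\chi_{\mathcal{S}}) \prod_{k \in \mathcal{S}} \chi_k(a_k) ,
\]
where the product $\prod_{k \in \mathcal{S}} \chi_k(a_k)$ is just a relabeling of the product $\chi_{1}(a_{k_1}) \cdots \chi_{|\mathcal{S}|}(a_{k_{|\mathcal{S}|}})$ of~\eqref{equ:fourier-inversion} under the increasing enumeration $k_1 < \cdots < k_{|\mathcal{S}|}$ of the elements of $\mathcal{S}$. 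Substituting this expression into the Möbius expansion and summing over all $\mathcal{S} \subseteq [r]$ yields the claimed identity.

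I expect no genuine difficulty here beyond notational bookkeeping: the one point requiring care is to match the components of the tuple $a_{\mathcal{S}}$ with the correspondingly indexed characters $\chi_k$ for $k \in \mathcal{S}$, so that the product over $k \in \mathcal{S}$ in the statement agrees with the product over coordinates in~\eqref{equ:fourier-inversion}. Since the support condition furnished by Lemma~\ref{lem:mobius-inversion} is exactly what licenses the use of Fourier inversion on each $f_{\mathcal{S}}$, no further estimates or constructions are needed, and the two formulas fit together immediately.
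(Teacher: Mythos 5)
Your proof is correct and is exactly the paper's argument: the paper's own proof of this lemma is the one-line statement that the claim follows from the Fourier inversion formula~\eqref{equ:fourier-inversion} and Lemma~\ref{lem:mobius-inversion}, and your write-up simply fills in the same chaining of the two results, including the key support observation that licenses applying~\eqref{equ:fourier-inversion} to each $f_{\mathcal{S}}$.
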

\begin{proof}
    The claim easily follows from the Fourier inversion formula~\eqref{equ:fourier-inversion} and Lemma~\ref{lem:mobius-inversion}.
\end{proof}

For every function $f \colon \mathbb{F}_q^r \to \mathbb{C}$ and for every $\mathcal{S} \subseteq [r]$, let $f_{(\mathcal{S})} \colon \mathbb{F}_q^{|\mathcal{S}|} \to \mathbb{C}$ be the function defined by $f_{(\mathcal{S})}(a_\mathcal{S}) := f(a_{(\mathcal{S})})$ for each $a_\mathcal{S} \in \mathbb{F}_q^{|\mathcal{S}|}$.

\begin{lemma}\label{lem:fourier-transform-of-f-S}
    Let $f \colon \mathbb{F}_q^r \to \mathbb{C}$ and $\mathcal{S} \subseteq [r]$.
    Then we have that
    \begin{equation*}
        \widehat{f_{\mathcal{S}}}(\chi_{\mathcal{S}}) = \sum_{\{\chi_k \neq \chi_0 : k \in \mathcal{S}\} \subseteq \mathcal{T} \subseteq \mathcal{S}} (-1)^{|\mathcal{S} \setminus \mathcal{T}|}\widehat{f_{(\mathcal{T})}}(\chi_{\mathcal{T}}) ,
    \end{equation*}
    for every $\chi_{\mathcal{S}} \in \widehat{\mathbb{F}_q^*}^{|\mathcal{S}|}$.
\end{lemma}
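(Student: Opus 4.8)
The plan is to start from the definition of the Fourier transform~\eqref{equ:def-fourier-transform} applied to $f_{\mathcal{S}}$ and substitute the defining expression~\eqref{equ:def-f-S}. First I would observe that, because of the convention $\chi(0) := 0$ for multiplicative characters, the sum in~\eqref{equ:def-fourier-transform} effectively ranges over $(\mathbb{F}_q^*)^{|\mathcal{S}|}$; in particular the same formula makes sense and computes $\widehat{f_{(\mathcal{T})}}$ for every $\mathcal{T} \subseteq \mathcal{S}$, even though $f_{(\mathcal{T})}$ need not be supported on $(\mathbb{F}_q^*)^{|\mathcal{T}|}$. Rewriting~\eqref{equ:def-f-S} in the form $f_{\mathcal{S}}(a_{\mathcal{S}}) = \sum_{\mathcal{T}\subseteq\mathcal{S}}(-1)^{|\mathcal{S}\setminus\mathcal{T}|} f_{(\mathcal{T})}(a_{\mathcal{T}})$, where $a_{\mathcal{T}}$ is the subtuple of $a_{\mathcal{S}}$ indexed by $\mathcal{T}$, I would then interchange the two finite sums, over $a_{\mathcal{S}}$ and over $\mathcal{T}$.

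The key step is that, for fixed $\mathcal{T} \subseteq \mathcal{S}$, the summand $f_{(\mathcal{T})}(a_{\mathcal{T}})$ depends only on the coordinates indexed by $\mathcal{T}$, so the inner sum over $a_{\mathcal{S}} \in \mathbb{F}_q^{|\mathcal{S}|}$ factors as
\[
    \Bigg(\sum_{a_{\mathcal{T}}} f_{(\mathcal{T})}(a_{\mathcal{T}}) \prod_{k\in\mathcal{T}} \overline{\chi_k}(a_k)\Bigg) \prod_{k\in\mathcal{S}\setminus\mathcal{T}} \sum_{a_k \in \mathbb{F}_q} \overline{\chi_k}(a_k) .
\]
The first factor equals $(q-1)^{|\mathcal{T}|}\,\widehat{f_{(\mathcal{T})}}(\chi_{\mathcal{T}})$ by~\eqref{equ:def-fourier-transform}, while each inner sum of the second factor equals $(q-1)\,\mathbbm{1}[\chi_k = \chi_0]$ by the orthogonality relation~\eqref{equ:ortho-chi-sum-a} (noting that $\overline{\chi_k} = \chi_0$ if and only if $\chi_k = \chi_0$). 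Collecting the powers of $q-1$, namely $(q-1)^{|\mathcal{T}|}$ from the first factor and $(q-1)^{|\mathcal{S}\setminus\mathcal{T}|}$ from the second, cancels exactly the normalizing factor $(q-1)^{-|\mathcal{S}|}$.

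What then remains is
\[
    \widehat{f_{\mathcal{S}}}(\chi_{\mathcal{S}}) = \sum_{\mathcal{T}\subseteq\mathcal{S}} (-1)^{|\mathcal{S}\setminus\mathcal{T}|}\, \widehat{f_{(\mathcal{T})}}(\chi_{\mathcal{T}}) \prod_{k\in\mathcal{S}\setminus\mathcal{T}}\mathbbm{1}[\chi_k=\chi_0] .
\]
The final step is to read off that the product of indicators is nonzero precisely when $\chi_k = \chi_0$ for every $k\in\mathcal{S}\setminus\mathcal{T}$, that is, when $\{k\in\mathcal{S}:\chi_k\neq\chi_0\}\subseteq\mathcal{T}$, which restricts the range of summation to exactly the one in the statement.

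I do not expect a serious obstacle, since the lemma is essentially a bookkeeping identity. The two points that require care are the observation that~\eqref{equ:def-fourier-transform} is meaningful for the functions $f_{(\mathcal{T})}$, which are generally \emph{not} supported on $(\mathbb{F}_q^*)^{|\mathcal{T}|}$, and the tracking of the exponents of $q-1$ so that they cancel the normalization cleanly; a sign or index slip in the latter is the most likely source of error.
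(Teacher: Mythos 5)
Your proposal is correct and takes essentially the same route as the paper's own proof: substitute \eqref{equ:def-f-S} into \eqref{equ:def-fourier-transform}, factor the sum over $a_{\mathcal{S}}$ into the $\mathcal{T}$ and $\mathcal{S} \setminus \mathcal{T}$ coordinates, and apply the orthogonality relation \eqref{equ:ortho-chi-sum-a} to obtain the indicator $\mathbbm{1}[\chi_k = \chi_0 \text{ for each } k \in \mathcal{S} \setminus \mathcal{T}]$, which restricts the range of summation exactly as in the statement. The two points of care you flag (applying the transform formula to $f_{(\mathcal{T})}$, which need not be supported on $(\mathbb{F}_q^*)^{|\mathcal{T}|}$, and the cancellation of the powers of $q-1$ against the normalization) are handled in precisely the same way in the paper.
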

\begin{proof}
    From~\eqref{equ:def-fourier-transform} and~\eqref{equ:def-f-S}, we get that
    \begin{align}\label{equ:fourier-transform-of-f-S-1}
        \widehat{f_{\mathcal{S}}}(\chi_{\mathcal{S}})
        &= \frac1{(q-1)^{|\mathcal{S}|}} \sum_{a_{\mathcal{S}} \in \mathbb{F}_q^{|\mathcal{S}|}} f_{\mathcal{S}}(a_{\mathcal{S}}) \prod_{k \in \mathcal{S}} \overline{\chi_k}(a_k) \\
        &= \frac1{(q-1)^{|\mathcal{S}|}} \sum_{a_{\mathcal{S}} \in \mathbb{F}_q^{|\mathcal{S}|}} \sum_{\mathcal{T} \subseteq \mathcal{S}} (-1)^{|\mathcal{S} \setminus \mathcal{T}|} f(a_{(\mathcal{T})}) \prod_{k \in \mathcal{S}} \overline{\chi_k}(a_k) \nonumber\\
        &= \sum_{\mathcal{T} \subseteq \mathcal{S}} (-1)^{|\mathcal{S} \setminus \mathcal{T}|} \left( \frac1{(q-1)^{|\mathcal{T}|}}\sum_{a_{\mathcal{T}} \in \mathbb{F}_q^{|\mathcal{T}|}} f_{(\mathcal{T})}(a_{\mathcal{T}}) \prod_{k \in \mathcal{T}} \overline{\chi_k}(a_k) \right) \nonumber\\
        &\phantom{\hspace{10em}}\cdot \left( \frac1{(q-1)^{|\mathcal{S} \setminus \mathcal{T}|}} \sum_{a_{\mathcal{S} \setminus \mathcal{T}} \in \mathbb{F}_q^{|\mathcal{S} \setminus \mathcal{T}|}} \prod_{k \in \mathcal{S} \setminus \mathcal{T}} \overline{\chi_k}(a_k) \right) . \nonumber
    \end{align}
    Furthermore, for every $\mathcal{U} \subseteq [r]$, we have that
    \begin{equation}\label{equ:fourier-transform-of-f-S-2}
        \sum_{a_{U} \in \mathbb{F}_q^{|\mathcal{U}|}} \prod_{k \in \mathcal{U}} \overline{\chi_k}(a_k) = \prod_{k \in \mathcal{U}} \left(\sum_{a \in \mathbb{F}_q} \overline{\chi_k}(a)\right) = (q - 1)^{|\mathcal{U}|} \, \mathbbm{1}[\chi_k = \chi_0 \text{ for each } k \in \mathcal{U}] ,
    \end{equation}
    where we employed the orthogonality relation~\eqref{equ:ortho-chi-sum-a}.
    At this point, the claim follows by combining~\eqref{equ:def-fourier-transform}, \eqref{equ:fourier-transform-of-f-S-1}, and \eqref{equ:fourier-transform-of-f-S-2}.
\end{proof}

\subsection{Some Fourier transforms}

For every $a \in \mathbb{F}_q$, define the function $f^{(a)} \colon \mathbb{F}_q^r \to \mathbb{C}$ by
\begin{equation*}
    f^{(a)}(a_1, \dots, a_r) = \mathbbm{1}\!\!\left[\,\sum_{k = 1}^r a_k = a\right] ,
\end{equation*}
for every $a_1, \dots, a_r \in \mathbb{F}_q^r$.

Furthermore, let $\chi_{\bm{0}}$ denote a tuple $(\chi_0, \dots, \chi_0)$, where the length will be always clear from the context.

\begin{lemma}\label{lem:fourier-transform-of-fz-T}
    For every $a \in \mathbb{F}_q$ and $\mathcal{T} \subseteq [r]$, we have that
    \begin{equation*}
        \widehat{f_{(\mathcal{T})}^{(a)}}(\chi_{\bm{0}}) = \frac1{q} - \frac{\gamma_a(q)}{(1 - q)^{|\mathcal{T}|}} .
    \end{equation*}
\end{lemma}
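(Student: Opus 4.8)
The plan is to unfold the two layers of definition and reduce the Fourier coefficient at $\chi_{\bm{0}}$ to an elementary count of tuples of nonzero field elements with a prescribed sum. First I would note that, since the coordinates outside $\mathcal{T}$ are set to $0$ in $a_{(\mathcal{T})}$, the restricted function is simply
\[
    f^{(a)}_{(\mathcal{T})}(a_{\mathcal{T}}) = \mathbbm{1}\!\left[\sum_{k \in \mathcal{T}} a_k = a\right] .
\]
Substituting this into the definition~\eqref{equ:def-fourier-transform} of the Fourier transform at the all-trivial point $\chi_{\bm{0}} = (\chi_0, \dots, \chi_0)$, and recalling the convention $\chi_0(0) = 0$ (so that $\overline{\chi_0}(a_k) = \mathbbm{1}[a_k \neq 0]$), the product of characters restricts the summation to tuples all of whose entries are nonzero. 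Writing $t := |\mathcal{T}|$, this gives
\[
    \widehat{f^{(a)}_{(\mathcal{T})}}(\chi_{\bm{0}}) = \frac{N(a, t)}{(q-1)^t}, \qquad N(a, t) := \left|\left\{(x_k)_{k \in \mathcal{T}} \in (\mathbb{F}_q^*)^t : \sum_{k \in \mathcal{T}} x_k = a\right\}\right| .
\]
The conceptual point is exactly this observation: under the stated convention the trivial multiplicative character behaves as an indicator of nonzero entries, turning the Fourier coefficient into a count over $(\mathbb{F}_q^*)^t$.

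The core of the proof is then the evaluation of $N(a, t)$, the number of ways of writing $a$ as an ordered sum of $t$ nonzero elements of $\mathbb{F}_q$, and I would carry this out by additive-character orthogonality. Expanding the indicator via~\eqref{equ:ortho-a-sum-psi} and using that each additive character is multiplicative on sums, one obtains
\[
    N(a, t) = \frac{1}{q} \sum_{\psi \in \widehat{\mathbb{F}_q}} \psi(-a) \left(\sum_{x \in \mathbb{F}_q^*} \psi(x)\right)^{\!t} .
\]
For $\psi = \psi_0$ the inner sum is $q - 1$, whereas for $\psi \neq \psi_0$ relation~\eqref{equ:ortho-psi-sum-a} gives $\sum_{x \in \mathbb{F}_q} \psi(x) = 0$ and hence $\sum_{x \in \mathbb{F}_q^*} \psi(x) = -1$. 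Separating the trivial character and applying~\eqref{equ:ortho-a-sum-psi} once more to evaluate $\sum_{\psi \neq \psi_0} \psi(-a) = q\,\mathbbm{1}[a = 0] - 1$ then yields the closed form
\[
    N(a, t) = \frac{(q-1)^t - (-1)^t}{q} + (-1)^t \, \mathbbm{1}[a = 0] .
\]
(An equivalent route is inclusion-exclusion over which coordinates vanish together with the binomial theorem, which produces the same expression.)

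Finally, I would divide by $(q-1)^t$ and use $(-1)^t/(q-1)^t = (1-q)^{-t}$ to rewrite the coefficient as
\[
    \widehat{f^{(a)}_{(\mathcal{T})}}(\chi_{\bm{0}}) = \frac{1}{q} - \frac{1}{(1-q)^t}\left(\frac{1}{q} - \mathbbm{1}[a = 0]\right) .
\]
The claimed identity then follows upon recognizing that the parenthesized factor is precisely $\gamma_a(q)$: indeed $q^{-1} - \mathbbm{1}[a = 0]$ equals $q^{-1} - 1 = \gamma_0(q)$ when $a = 0$ and equals $q^{-1} = \gamma_a(q)$ when $a \neq 0$. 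Since the computation is otherwise routine, the only real care needed is the bookkeeping of the sign factors $(-1)^t$ and the $a = 0$ versus $a \neq 0$ case distinction; this is the step I would be most careful to get right.
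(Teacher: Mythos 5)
Your proposal is correct and follows essentially the same route as the paper: both use the convention $\chi_0(0)=0$ to reduce the Fourier coefficient to a count of tuples in $(\mathbb{F}_q^*)^{|\mathcal{T}|}$ with prescribed sum, and both evaluate that count by expanding the indicator with additive characters, separating $\psi_0$ (inner sum $q-1$) from the nontrivial characters (inner sum $-1$), and identifying $q^{-1}-\mathbbm{1}[a=0]$ with $\gamma_a(q)$. The only difference is cosmetic bookkeeping (you isolate the count $N(a,t)$ before dividing by $(q-1)^{|\mathcal{T}|}$).
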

\begin{proof}
    This is essentially the evaluation of a generalized Jacobi sum of trivial characters, which is a well-known subject (see, e.g., \cite[Theorem~6.1.35]{MR3087321}), but we include the details for completeness.

    First, from~\eqref{equ:ortho-psi-sum-a} it follows that
    \begin{equation}\label{equ:fourier-transform-of-fz-T-1}
        \sum_{a_{\mathcal{T}} \in (\mathbb{F}_q^*)^{|\mathcal{T}|}} \, \prod_{k \in \mathcal{T}} \psi(a_k)
        = \prod_{k \in \mathcal{T}} \left( \sum_{a \in \mathbb{F}_q^*} \psi(a) \right)
        = \begin{cases}
            (q - 1)^{|\mathcal{T}|} & \text{ if } \psi = \psi_0 ; \\
            (-1)^{|\mathcal{T}|} & \text{ if } \psi \neq \psi_0 ;
        \end{cases}
    \end{equation}
    for every $\psi \in \widehat{\mathbb{F}_q}$.
    Then, from~\eqref{equ:def-fourier-transform}, ~\eqref{equ:ortho-a-sum-psi}, and~\eqref{equ:fourier-transform-of-fz-T-1}, we get that
    \begin{align*}
        \widehat{f_{(\mathcal{T})}^{(a)}}(\chi_{\bm{0}})
        &= \frac1{(q - 1)^{|\mathcal{T}|}} \sum_{a_{\mathcal{T}} \in \mathbb{F}_q^{|\mathcal{T}|}} f_{(\mathcal{T})}^{(a)}(a_{\mathcal{T}}) \prod_{k \in \mathcal{T}} \overline{\chi_0}(a_k)
        = \frac1{(q - 1)^{|\mathcal{T}|}} \sum_{a_{\mathcal{T}} \in (\mathbb{F}_q^*)^{|\mathcal{T}|}} f_{(\mathcal{T})}^{(a)}(a_{\mathcal{T}}) \\
        &= \frac1{(q - 1)^{|\mathcal{T}|}} \sum_{a_{\mathcal{T}} \in (\mathbb{F}_q^*)^{|\mathcal{T}|}} \mathbbm{1}\!\!\left[\,\sum_{k \in \mathcal{T}} a_k = a\right]
        = \frac1{(q - 1)^{|\mathcal{T}|}} \sum_{a_{\mathcal{T}} \in (\mathbb{F}_q^*)^{|\mathcal{T}|}} \frac1{q} \sum_{\psi \in \widehat{\mathbb{F}_q}} \psi\!\left(\,\sum_{k \in \mathcal{T}} a_k - a\right) \\
        &= \frac1{q(q - 1)^{|\mathcal{T}|}} \sum_{\psi \in \widehat{\mathbb{F}_q}} \, \sum_{a_{\mathcal{T}} \in (\mathbb{F}_q^*)^{|\mathcal{T}|}} \, \prod_{k \in \mathcal{T}} \psi(a_k) \, \overline{\psi}(a)
        = \frac1{q} + \frac1{q(1 - q)^{|\mathcal{T}|}} \sum_{\psi \in \widehat{\mathbb{F}_q} \setminus \{\psi_0\}} \overline{\psi}(a) \\
        &= \frac1{q} + \frac1{q(1 - q)^{|\mathcal{T}|}} \! \left(\sum_{\psi \in \widehat{\mathbb{F}_q}} \overline{\psi}(a) - 1 \right)
        = \frac1{q} - \frac{\gamma_a(q)}{(1 - q)^{|\mathcal{T}|}} ,
    \end{align*}
    since $\gamma_a(q) = q^{-1} - \mathbbm{1}[a = 0]$.
    The proof is complete.
\end{proof}

\begin{lemma}\label{lem:fourier-transform-of-fz-S}
    For every $a \in \mathbb{F}_q$ and $\mathcal{S} \subseteq [r]$, we have that
    \begin{equation*}
        \widehat{f_{\mathcal{S}}^{(a)}}(\chi_{\bm{0}}) = \frac{\mathbbm{1}[\mathcal{S} = \varnothing]}{q} - \gamma_a(q) \left(\frac1{q} - 1\right)^{-|\mathcal{S}|} .
    \end{equation*}
\end{lemma}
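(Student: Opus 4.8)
The plan is to reduce the claim to the two inversion results already established and then carry out an elementary binomial summation. I would begin from Lemma~\ref{lem:fourier-transform-of-f-S}, which expresses $\widehat{f_{\mathcal{S}}}(\chi_{\mathcal{S}})$ in terms of the $\widehat{f_{(\mathcal{T})}}(\chi_{\mathcal{T}})$, and specialize it to $f = f^{(a)}$ and $\chi_{\mathcal{S}} = \chi_{\bm{0}}$. The key observation is that when every component of $\chi_{\mathcal{S}}$ is the trivial character $\chi_0$, the set $\{\chi_k \neq \chi_0 : k \in \mathcal{S}\}$ is empty, so the constraint $\{\chi_k \neq \chi_0 : k \in \mathcal{S}\} \subseteq \mathcal{T} \subseteq \mathcal{S}$ appearing there reduces to simply $\mathcal{T} \subseteq \mathcal{S}$. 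Hence I would obtain
\begin{equation*}
    \widehat{f_{\mathcal{S}}^{(a)}}(\chi_{\bm{0}}) = \sum_{\mathcal{T} \subseteq \mathcal{S}} (-1)^{|\mathcal{S} \setminus \mathcal{T}|}\, \widehat{f_{(\mathcal{T})}^{(a)}}(\chi_{\bm{0}}) .
\end{equation*}

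Next I would insert the closed form for $\widehat{f_{(\mathcal{T})}^{(a)}}(\chi_{\bm{0}})$ supplied by Lemma~\ref{lem:fourier-transform-of-fz-T}, namely $q^{-1} - \gamma_a(q)(1-q)^{-|\mathcal{T}|}$, and split the resulting sum into a $q^{-1}$-part and a $\gamma_a(q)$-part. For the first part, reindexing by $\mathcal{U} := \mathcal{S} \setminus \mathcal{T}$ turns $\sum_{\mathcal{T} \subseteq \mathcal{S}} (-1)^{|\mathcal{S} \setminus \mathcal{T}|}$ into $\sum_{\mathcal{U} \subseteq \mathcal{S}} (-1)^{|\mathcal{U}|}$, which by~\eqref{equ:sum-over-subsets} equals $\mathbbm{1}[\mathcal{S} = \varnothing]$; this yields the first term $\mathbbm{1}[\mathcal{S} = \varnothing]/q$ of the claimed identity.

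For the second part I would exploit that the summand $(-1)^{|\mathcal{S} \setminus \mathcal{T}|}(1-q)^{-|\mathcal{T}|}$ depends on $\mathcal{T}$ only through its cardinality $j := |\mathcal{T}|$. Grouping the $\binom{|\mathcal{S}|}{j}$ subsets of each size $j$ gives $\sum_{j=0}^{|\mathcal{S}|} \binom{|\mathcal{S}|}{j} (-1)^{|\mathcal{S}| - j} (1-q)^{-j}$, and the binomial theorem collapses this to $\big((1-q)^{-1} - 1\big)^{|\mathcal{S}|}$. Simplifying $(1-q)^{-1} - 1 = q/(1-q) = (q^{-1} - 1)^{-1}$ rewrites the factor as $(q^{-1}-1)^{-|\mathcal{S}|}$, so the second part equals $-\gamma_a(q)\,(q^{-1}-1)^{-|\mathcal{S}|}$, completing the stated identity.

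I do not expect any genuine obstacle here: once the specialization to $\chi_{\bm{0}}$ trivializes the lower index constraint, the remainder is a routine binomial computation. The only points demanding care are the sign bookkeeping in the exponent $|\mathcal{S}| - j$ and the algebraic rewriting $q/(1-q) = (q^{-1}-1)^{-1}$, which must be verified so that both the sign and the negative exponent $-|\mathcal{S}|$ in the conclusion come out correctly.
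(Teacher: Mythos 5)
Your proof is correct and follows essentially the same route as the paper: specialize Lemma~\ref{lem:fourier-transform-of-f-S} at $\chi_{\bm{0}}$ (where the constraint on $\mathcal{T}$ trivializes), insert Lemma~\ref{lem:fourier-transform-of-fz-T}, and evaluate the two resulting alternating sums. The only cosmetic difference is that you evaluate the second sum by grouping subsets by cardinality and invoking the binomial theorem, whereas the paper cites the equivalent identity $\sum_{\mathcal{A} \subseteq \mathcal{B}} s^{|\mathcal{B} \setminus \mathcal{A}|} t^{|\mathcal{A}|} = (s+t)^{|\mathcal{B}|}$, which is proved by exactly that grouping.
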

\begin{proof}
    By Lemma~\ref{lem:fourier-transform-of-f-S} and Lemma~\ref{lem:fourier-transform-of-fz-T}, we have that
    \begin{align*}
        \widehat{f^{(a)}_{\mathcal{S}}}(\chi_{\bm{0}})
        &= \sum_{\mathcal{T} \subseteq \mathcal{S}} (-1)^{|\mathcal{S} \setminus \mathcal{T}|}\widehat{f^{(a)}_{(\mathcal{T})}}(\chi_{\bm{0}})
        = \sum_{\mathcal{T} \subseteq \mathcal{S}} (-1)^{|\mathcal{S} \setminus \mathcal{T}|}\left(\frac1{q} - \frac{\gamma_a(q)}{(1 - q)^{|\mathcal{T}|}}\right) \\
        &= \frac1{q} \sum_{\mathcal{T} \subseteq \mathcal{S}} (-1)^{|\mathcal{S} \setminus \mathcal{T}|} - \gamma_a(q) \sum_{\mathcal{T} \subseteq \mathcal{S}} (-1)^{|\mathcal{S} \setminus \mathcal{T}|} (1 - q)^{-|\mathcal{T}|} \\
        &= \frac{\mathbbm{1}[\mathcal{S} = \varnothing]}{q} - \gamma_a(q) \left(\frac1{q} - 1\right)^{-|\mathcal{S}|} ,
    \end{align*}
    where we used~\eqref{equ:sum-over-subsets} and the more general fact that
    \begin{equation*}
        \sum_{\mathcal{A} \subseteq \mathcal{B}} s^{|\mathcal{B} \setminus \mathcal{A}|} \, t^{|\mathcal{A}|} = (s + t)^{|\mathcal{B}|}
    \end{equation*}
    for every finite set $\mathcal{B}$ and for all real numbers $s$ and $t$.
\end{proof}

\section{Further preliminaries}

For every field $\mathbb{K}$, let $\mathbb{K}^{m \times n}$ be the vector space of $m \times n$ matrices over $\mathbb{K}$, and let $\mathbb{K}^{m \times n, r}$ be the set of $m \times n$ rank-$r$ matrices over $\mathbb{K}$.
The next lemma regards the \emph{full-rank factorization} of matrices and it is well known~(cf.~\cite[Theorem~2]{MR1573394}).

\begin{lemma}\label{lem:full-rank-factorization}
    Let $\mathbb{K}$ be a field.
    For every $\mathbf{N} \in \mathbb{K}^{m \times n, r}$ there exist $\mathbf{X}_0 \in \mathbb{K}^{m \times r, r}$ and $\mathbf{Y}_0 \in \mathbb{K}^{r \times n, r}$ such that $\mathbf{N} = \mathbf{X}_0 \mathbf{Y}_0$.
    Moreover, if $\mathbf{N} = \mathbf{X} \mathbf{Y}$ for some $\mathbf{X} \in \mathbb{K}^{m \times r}$ and $\mathbf{Y}  \in \mathbb{K}^{r \times n}$, then there exists $\mathbf{R} \in \mathbb{K}^{r \times r, r}$ such that $\mathbf{X} = \mathbf{X}_0 \mathbf{R}$ and $\mathbf{Y} = \mathbf{R}^{-1}\mathbf{Y}_0$.
\end{lemma}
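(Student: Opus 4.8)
The plan is to prove existence by exhibiting a basis of the column space of $\mathbf{N}$ and uniqueness by cancelling the shared full-rank factor with a one-sided inverse. For existence, I would first observe that, since $\rk(\mathbf{N}) = r$, the column space $\mathcal{C} := \operatorname{col}(\mathbf{N}) \subseteq \mathbb{K}^m$ has dimension $r$. Choosing a basis of $\mathcal{C}$ and letting its vectors be the columns of a matrix $\mathbf{X}_0 \in \mathbb{K}^{m \times r}$ produces a rank-$r$ matrix whose columns span $\mathcal{C}$. Each column of $\mathbf{N}$ then has a unique expansion in this basis, and gathering the coordinate vectors as the columns of a matrix $\mathbf{Y}_0 \in \mathbb{K}^{r \times n}$ gives $\mathbf{N} = \mathbf{X}_0 \mathbf{Y}_0$. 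The rank of $\mathbf{Y}_0$ is then forced to equal $r$ by the submultiplicativity bound $r = \rk(\mathbf{N}) \le \rk(\mathbf{Y}_0) \le r$, so $\mathbf{Y}_0 \in \mathbb{K}^{r \times n, r}$, as required.

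For the uniqueness clause, I would start from an arbitrary factorization $\mathbf{N} = \mathbf{X}\mathbf{Y}$ with $\mathbf{X} \in \mathbb{K}^{m \times r}$ and $\mathbf{Y} \in \mathbb{K}^{r \times n}$, and note that the rank bound $r = \rk(\mathbf{N}) \le \min\{\rk(\mathbf{X}), \rk(\mathbf{Y})\} \le r$ forces both factors to have rank exactly $r$ --- so full-rank-ness of $\mathbf{X}$ and $\mathbf{Y}$ need not be assumed, only derived. Since $\operatorname{col}(\mathbf{N}) \subseteq \operatorname{col}(\mathbf{X})$ and both have dimension $r$, I get $\operatorname{col}(\mathbf{X}) = \mathcal{C} = \operatorname{col}(\mathbf{X}_0)$; the columns of $\mathbf{X}$ and of $\mathbf{X}_0$ are therefore two bases of $\mathcal{C}$, and the change-of-basis matrix between them is an invertible $\mathbf{R} \in \mathbb{K}^{r \times r, r}$ with $\mathbf{X} = \mathbf{X}_0 \mathbf{R}$. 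Substituting this into $\mathbf{X}_0 \mathbf{Y}_0 = \mathbf{N} = \mathbf{X}\mathbf{Y} = \mathbf{X}_0 \mathbf{R}\mathbf{Y}$ and cancelling $\mathbf{X}_0$ on the left leaves $\mathbf{Y}_0 = \mathbf{R}\mathbf{Y}$, i.e.\ $\mathbf{Y} = \mathbf{R}^{-1}\mathbf{Y}_0$.

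The one genuinely delicate step is the left-cancellation of $\mathbf{X}_0$. Over $\mathbb{R}$ one would simply invert $\mathbf{X}_0^{\mathsf{T}}\mathbf{X}_0$, but over a finite field this Gram matrix can be singular, so I would instead construct a left inverse by hand: pick $r$ rows of $\mathbf{X}_0$ that form an invertible $r \times r$ submatrix (possible since $\rk(\mathbf{X}_0) = r$), and let $\mathbf{X}_0^{+} \in \mathbb{K}^{r \times m}$ be the inverse of that submatrix placed in the corresponding columns and zeros elsewhere, so that $\mathbf{X}_0^{+}\mathbf{X}_0 = \mathbf{I}_r$. Multiplying $\mathbf{X}_0 \mathbf{Y}_0 = \mathbf{X}_0 \mathbf{R}\mathbf{Y}$ on the left by $\mathbf{X}_0^{+}$ then performs the cancellation cleanly. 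Apart from this point, the argument is routine bookkeeping with bases and the rank inequality $\rk(\mathbf{A}\mathbf{B}) \le \min\{\rk(\mathbf{A}), \rk(\mathbf{B})\}$, and I would expect the whole proof to be short.
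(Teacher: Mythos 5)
Your proof is correct, but it takes a different route from the paper: the paper does not prove the lemma from scratch at all, instead citing \cite[Lemma~2.1]{MR4531531} and adding only the one-line observation needed to reconcile the statements, namely that if $\mathbf{X} \in \mathbb{K}^{m \times r}$ and $\mathbf{Y} \in \mathbb{K}^{r \times n}$ satisfy $\mathbf{X}\mathbf{Y} \in \mathbb{K}^{m \times n, r}$, then $\mathbf{X}$ and $\mathbf{Y}$ are automatically of rank $r$ (so the cited version, which assumes full rank of the factors, applies). Your argument reproduces exactly that observation via $r = \rk(\mathbf{N}) \le \min\{\rk(\mathbf{X}), \rk(\mathbf{Y})\} \le r$ and then supplies the full underlying proof: existence by taking a basis of $\operatorname{col}(\mathbf{N})$ as the columns of $\mathbf{X}_0$ and the coordinate vectors as $\mathbf{Y}_0$, and the transition-matrix claim by identifying $\operatorname{col}(\mathbf{X}) = \operatorname{col}(\mathbf{X}_0)$ and left-cancelling $\mathbf{X}_0$. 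All steps check out, including the explicit left inverse built from an invertible $r \times r$ row-submatrix of $\mathbf{X}_0$ (which correctly sidesteps the failure of $\mathbf{X}_0^{\mathsf{T}}\mathbf{X}_0$ to be invertible over a finite field); note, though, that the cancellation is even more immediate from the fact that $\mathbf{X}_0$ has linearly independent columns, so $\mathbf{X}_0(\mathbf{Y}_0 - \mathbf{R}\mathbf{Y}) = \mathbf{0}$ forces $\mathbf{Y}_0 = \mathbf{R}\mathbf{Y}$ without constructing $\mathbf{X}_0^{+}$ at all. What your approach buys is self-containedness; what the paper's buys is brevity, since the result is classical (full-rank factorization, cf.~\cite{MR1573394}).
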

\begin{proof}
    See, e.g., \cite[Lemma~2.1]{MR4531531}.
    There the second part of the lemma is stated with $\mathbf{X} \in \mathbb{K}^{m \times r,r}$ and $\mathbf{Y}  \in \mathbb{K}^{r \times n,r}$ instead of $\mathbf{X} \in \mathbb{K}^{m \times r}$ and $\mathbf{Y}  \in \mathbb{K}^{r \times n}$.
    However, if $\mathbf{X} \in \mathbb{K}^{m \times r}$ and $\mathbf{Y}  \in \mathbb{K}^{r \times n}$ satisfy $\mathbf{X}\mathbf{Y} \in \mathbb{K}^{m \times n, r}$, then $\mathbf{X} \in \mathbb{K}^{m \times r,r}$ and $\mathbf{Y}  \in \mathbb{K}^{r \times n,r}$.
    Therefore, the two versions are equivalent.
\end{proof}

\begin{lemma}\label{lem:same-as-XY}
    Let $\mathbf{M} \in \mathbb{F}_q^{m \times n, r}$, $\mathbf{X} \in \mathbb{F}_q^{m \times r}$, and $\mathbf{Y} \in \mathbb{F}_q^{r \times n}$ be independent random matrices uniformly distributed in their respective spaces.
    Then we have that
    \begin{equation}\label{equ:same-as-XY-sum}
        \sum_{\mathbf{N} \in \mathbb{F}_q^{m \times n}} \left|\mathbb{P}\big[\mathbf{X}\mathbf{Y} = \mathbf{N}\big] - \mathbb{P}\big[\mathbf{M} = \mathbf{N}\big]\!\right| \to 0 ,
    \end{equation}
    as $m, n \to +\infty$ and $r$ is fixed.
\end{lemma}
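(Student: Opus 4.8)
The plan is to show that $\mathbf{X}\mathbf{Y}$, once we condition on the event that it has rank exactly $r$, is uniformly distributed on $\mathbb{F}_q^{m \times n, r}$, and that this event has probability tending to $1$; the two facts together force the total variation distance in~\eqref{equ:same-as-XY-sum} to $0$. First I would record the elementary observation that $\rk(\mathbf{X}\mathbf{Y}) = r$ if and only if both $\mathbf{X}$ and $\mathbf{Y}$ have rank $r$. Indeed $r$ is the largest possible rank of the product, and it is attained precisely when $\mathbf{Y}$ is surjective and $\mathbf{X}$ is injective as linear maps, i.e. when both factors have full rank $r$. Using the independence of $\mathbf{X}$ and $\mathbf{Y}$, this gives $p := \mathbb{P}[\rk(\mathbf{X}\mathbf{Y}) = r] = \mathbb{P}[\rk(\mathbf{X}) = r]\,\mathbb{P}[\rk(\mathbf{Y}) = r]$.

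Next I would fix a matrix $\mathbf{N} \in \mathbb{F}_q^{m \times n, r}$ and count the pairs $(\mathbf{X}, \mathbf{Y})$ with $\mathbf{X}\mathbf{Y} = \mathbf{N}$. By Lemma~\ref{lem:full-rank-factorization}, after choosing one full-rank factorization $\mathbf{N} = \mathbf{X}_0 \mathbf{Y}_0$, every such pair has the form $(\mathbf{X}_0 \mathbf{R}, \mathbf{R}^{-1} \mathbf{Y}_0)$ with $\mathbf{R} \in \mathbb{F}_q^{r \times r, r}$, and distinct $\mathbf{R}$ yield distinct pairs. Hence there are exactly $|\mathbb{F}_q^{r \times r, r}|$ such pairs, a number independent of $\mathbf{N}$; dividing by $q^{r(m+n)}$, the total number of pairs $(\mathbf{X}, \mathbf{Y})$, shows that $\mathbb{P}[\mathbf{X}\mathbf{Y} = \mathbf{N}]$ takes one and the same value for every $\mathbf{N} \in \mathbb{F}_q^{m \times n, r}$. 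Summing this common value over all rank-$r$ matrices identifies it as $p / |\mathbb{F}_q^{m \times n, r}|$, whereas $\mathbb{P}[\mathbf{M} = \mathbf{N}] = 1 / |\mathbb{F}_q^{m \times n, r}|$ for the same $\mathbf{N}$.

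With these values in hand I would split the sum in~\eqref{equ:same-as-XY-sum} according to the rank of $\mathbf{N}$. For $\mathbf{N}$ of rank $r$, each summand equals $(1 - p)/|\mathbb{F}_q^{m \times n, r}|$, so this part contributes $1 - p$ in total. For $\mathbf{N}$ of rank different from $r$ we have $\mathbb{P}[\mathbf{M} = \mathbf{N}] = 0$, and since $\rk(\mathbf{X}\mathbf{Y}) \le r$ always holds, only ranks strictly below $r$ can occur; their combined contribution is $\mathbb{P}[\rk(\mathbf{X}\mathbf{Y}) < r] = 1 - p$. Therefore the whole sum equals $2(1 - p)$, and it remains only to prove that $p \to 1$. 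This follows from the classical count $\mathbb{P}[\rk(\mathbf{X}) = r] = \prod_{i=0}^{r-1} (1 - q^{\,i - m}) \to 1$ as $m \to +\infty$, together with the analogous statement for $\mathbf{Y}$ as $n \to +\infty$, with $r$ and $q$ fixed.

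All the ingredients are elementary, and the only step demanding genuine care—the crux of the argument—is the counting of factorizations, where Lemma~\ref{lem:full-rank-factorization} is essential: it guarantees that every rank-$r$ target admits exactly $|\mathbb{F}_q^{r \times r, r}|$ factorizations, so that conditioning on the product being full rank reproduces precisely the uniform distribution of $\mathbf{M}$, and the discrepancy is controlled entirely by the rank-deficiency probability $1 - p$.
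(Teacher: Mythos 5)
Your proposal is correct and follows essentially the same route as the paper: split the sum by the rank of $\mathbf{N}$, use Lemma~\ref{lem:full-rank-factorization} to show $\mathbb{P}[\mathbf{X}\mathbf{Y}=\mathbf{N}]$ is constant on $\mathbb{F}_q^{m\times n,r}$, and reduce everything to $\mathbb{P}[\rk(\mathbf{X}\mathbf{Y})=r]\to 1$. The only (cosmetic) difference is that you identify the common value as $p/|\mathbb{F}_q^{m\times n,r}|$ by a normalization argument, so the total is exactly $2(1-p)$, whereas the paper evaluates the rank-$r$ contribution directly from the explicit counting formula for $|\mathbb{F}_q^{m\times n,r}|$.
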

\begin{proof}
	It is well-known (see, e.g., \cite[Formula~3]{MR2257016}) that
	\begin{equation}\label{equ:rank-count}
		|\mathbb{F}_q^{s \times t, r}| = \prod_{i = 0}^{r - 1} \frac{(q^s - q^i)(q^t - q^i)}{q^r - q^i} ,
	\end{equation}
	for all integers $s, t, r > 0$ with $r \leq \min(s, t)$.

    Furthermore, we have that
	\begin{equation}\label{equ:q-limit}
		\frac{\prod_{i=0}^{r - 1} (q^m - q^i)(q^n - q^i)}{q^{mr} \cdot q^{rn}}
		= \prod_{i=0}^{r - 1} \frac{(q^m - q^i)(q^n - q^i)}{q^m \cdot q^n}
		= \prod_{i=0}^{r - 1} (1 - q^{i-m})(1 - q^{i-n}) \to 1 .
	\end{equation}
	as $m, n \to +\infty$ and $r$ is fixed.

	Let us split the sum in~\eqref{equ:same-as-XY-sum} into three sums $\Sigma_{(<)}$, $\Sigma_{(=)}$, $\Sigma_{(>)}$ according to the rank of $\mathbf{N}$ being less than, equal to, or greater than $r$, respectively.
    We have to prove that, in the aforementioned limit, each of these sums goes to zero.

    For every matrix $\mathbf{Z}$ over $\mathbb{F}_q$, let $\rk(\mathbf{Z})$ denote the rank of $\mathbf{Z}$.
	From~\eqref{equ:rank-count} and~\eqref{equ:q-limit}, we get that
	\begin{align*}
		\Sigma_{(<)} &= \sum_{\substack{\mathbf{N} \in \mathbb{F}_q^{m \times n} \\ \rk(\mathbf{N}) < r}} \mathbb{P}\big[\mathbf{X}\mathbf{Y} = \mathbf{N}\big]
        = \mathbb{P}\big[\!\rk(\mathbf{X}\mathbf{Y}) < r\big]
        = 1 - \mathbb{P}[\mathbf{X} \in \mathbb{F}_q^{m \times r, r}] \, \mathbb{P}[\mathbf{Y} \in \mathbb{F}_q^{r \times n, r}] \\
		&= 1 - \frac{|\mathbb{F}_q^{m \times r, r}||\mathbb{F}_q^{r \times n, r}|}{|\mathbb{F}_q^{m \times r}||\mathbb{F}_q^{r \times n}|}
		= 1 - \frac{\prod_{i=0}^{r - 1} (q^m - q^i)(q^n - q^i)}{q^{mr} \cdot q^{rn}} \to 0 ,
	\end{align*}
	where we used the fact that $\rk(\mathbf{X}\mathbf{Y}) \leq r$ with equality if and only if $\rk(\mathbf{X}) = \rk(\mathbf{Y}) = r$.

	If $\mathbf{N} \in \mathbb{F}_q^{m \times n, r}$ then, by Lemma~\ref{lem:full-rank-factorization}, there exist matrices $\mathbf{X}_0 \in \mathbb{F}_q^{m \times r, r}$ and $\mathbf{Y}_0 \in \mathbb{F}_q^{r \times n, r}$ such that $\mathbf{N} = \mathbf{X}_0 \mathbf{Y}_0$.
	Moreover, again by Lemma~\ref{lem:full-rank-factorization}, we have that $\mathbf{X}\mathbf{Y} = \mathbf{N}$ if and only if there exists $\mathbf{R} \in \mathbb{F}_q^{r \times r, r}$ such that $\mathbf{X} = \mathbf{X}_0 \mathbf{R}$ and $\mathbf{Y} = \mathbf{R}^{-1} \mathbf{Y}_0$.
	Consequently, we have that
    \begin{equation*}
		\mathbb{P}\big[\mathbf{X}\mathbf{Y} = \mathbf{N}\big]
		= \sum_{\mathbf{R} \in \mathbb{F}_q^{r \times r, r}} \mathbb{P}\big[\mathbf{X} = \mathbf{X}_0\mathbf{R}\big] \, \mathbb{P}\big[\mathbf{Y} = \mathbf{R}^{-1}\mathbf{Y}_0\big] = \frac{|\mathbb{F}_q^{r \times r, r}|}{|\mathbb{F}_q^{m \times r}||\mathbb{F}_q^{r \times n}|} .
	\end{equation*}
	Therefore, we get that
	\begin{align*}
		\Sigma_{(=)} &= \sum_{\mathbf{N} \in \mathbb{F}_q^{m \times n, r}} \left|\mathbb{P}\big[\mathbf{X}\mathbf{Y} = \mathbf{N}\big] - \mathbb{P}\big[\mathbf{M} = \mathbf{N}\big]\!\right|
		= \sum_{\mathbf{N} \in \mathbb{F}_q^{m \times n, r}} \left|\frac{|\mathbb{F}_q^{r \times r, r}|}{|\mathbb{F}_q^{m \times r}||\mathbb{F}_q^{r \times n}|} - \frac1{|\mathbb{F}_q^{m \times n, r}|}\!\right| \\
		&= \left|\frac{|\mathbb{F}_q^{r \times r, r}||\mathbb{F}_q^{m \times n, r}|}{|\mathbb{F}_q^{m \times r}||\mathbb{F}_q^{r \times n}|} - 1 \right|
		= \left|\frac{\prod_{i=0}^{r - 1} (q^m - q^i)(q^n - q^i)}{q^{mr} \cdot q^{rn}} - 1\right| \to 0 ,
	\end{align*}
	where we employed~\eqref{equ:rank-count} and~\eqref{equ:q-limit}.

	Finally, since $\mathbf{X}\mathbf{Y}$ and $\mathbf{M}$ have rank not exceeding $r$, it follows that $\Sigma_{(>)} = 0$.
    Thus all the three sums go to zero and the proof is complete.
\end{proof}

The next result is a version of Slutsky's lemma (cf.~\cite[Lemma~2.8]{MR1652247}).

\begin{lemma}\label{lem:slutsky}
    Let $(U_n)$ and $(V_n)$ be sequences of complex random variables such that $U_n \xrightarrow{d} U$ and $V_n \xrightarrow{d} c$ as $n \to +\infty$, where $U$ is a random variable and $c$ is a constant.
    Then we have that:
    \begin{enumerate}
        \item\label{ite:slutsky:i} $U_n + V_n \xrightarrow{d} U + c$; and
        \item\label{ite:slutsky:ii} $U_n V_n \xrightarrow{d} U c$;
    \end{enumerate}
    as $n \to +\infty$.
\end{lemma}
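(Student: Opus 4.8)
The plan is to reduce the statement to the classical real multivariate Slutsky's lemma \cite[Lemma~2.8]{MR1652247} together with the continuous mapping theorem. Writing $U_n = A_n + \bm{i}B_n$, $V_n = C_n + \bm{i}D_n$, $U = A + \bm{i}B$, and $c = c_1 + \bm{i}c_2$, the hypothesis $U_n \xrightarrow{d} U$ means that the $\mathbb{R}^2$-valued random vectors $(A_n, B_n)$ converge in distribution to $(A, B)$, and likewise $V_n \xrightarrow{d} c$ means $(C_n, D_n) \xrightarrow{d} (c_1, c_2)$.

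The first step is to upgrade the convergence of $V_n$ to convergence in probability. Since the limit $c$ is a constant, for every $\varepsilon > 0$ the portmanteau theorem applied to the closed set $\{z \in \mathbb{C} : |z - c| \geq \varepsilon\}$ gives $\limsup_n \mathbb{P}\big[|V_n - c| \geq \varepsilon\big] \leq \mathbb{P}\big[|c - c| \geq \varepsilon\big] = 0$, so that $V_n$ converges to $c$ in probability.

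The second step is to promote the two marginal convergences to the single joint convergence $(A_n, B_n, C_n, D_n) \xrightarrow{d} (A, B, c_1, c_2)$ in $\mathbb{R}^4$. This is where the constancy of the limit of $V_n$ is essential: for any bounded Lipschitz function $g \colon \mathbb{R}^4 \to \mathbb{R}$ with Lipschitz constant $L$ and bound $M$, one has $\big|\mathbb{E}[g(U_n, V_n)] - \mathbb{E}[g(U_n, c)]\big| \leq \mathbb{E}\big[\min(L|V_n - c|,\, 2M)\big] \to 0$ by Step~1, while $\mathbb{E}[g(U_n, c)] \to \mathbb{E}[g(U, c)]$ because $u \mapsto g(u, c)$ is bounded and continuous and $U_n \xrightarrow{d} U$; hence $\mathbb{E}[g(U_n, V_n)] \to \mathbb{E}[g(U, c)]$ for all such $g$, which is equivalent to the claimed joint convergence.

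Finally, the addition map $(u, v) \mapsto u + v$ and the multiplication map $(u, v) \mapsto uv$ from $\mathbb{C}^2 \cong \mathbb{R}^4$ to $\mathbb{C} \cong \mathbb{R}^2$ are continuous, the latter being the real-bilinear map $(a, b, c, d) \mapsto (ac - bd,\, ad + bc)$. Applying the continuous mapping theorem to the joint convergence of Step~2 therefore yields $U_n + V_n \xrightarrow{d} U + c$ and $U_n V_n \xrightarrow{d} Uc$, which are exactly the two assertions~(i) and~(ii). The main obstacle is Step~2: marginal convergence in distribution does not in general entail joint convergence, and the argument genuinely relies on $V_n$ tending to a constant rather than to a nondegenerate random variable.
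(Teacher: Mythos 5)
Your proof is correct and follows essentially the same route as the paper, which identifies $\mathbb{C}$ with $\mathbb{R}^2$ and invokes \cite[Theorem~2.7, Lemma~2.8]{MR1652247} for exactly the two ingredients you supply: joint convergence $(U_n,V_n)\xrightarrow{d}(U,c)$ when the second limit is a constant, followed by the continuous mapping theorem applied to the (real-bilinear, hence continuous) addition and multiplication maps. The only difference is that you write out in full the steps the paper delegates to the citation, correctly flagging that the constancy of $c$ is what makes the passage from marginal to joint convergence legitimate.
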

\begin{proof}
    In \cite[Lemma~2.8]{MR1652247} the result is stated for real random variables.
    However, the proof can be easily adapted by identifying $\mathbb{C}$ with $\mathbb{R}^2$ and applying \cite[Theorem~2.7]{MR1652247} accordingly; noting that, with this identification, the addition and the multiplication of two complex numbers are continuous functions $\mathbb{R}^2 \times \mathbb{R}^2 \to \mathbb{R}^2$.
\end{proof}

\begin{lemma}\label{lem:linear-comb-of-indep-normals}
    Let $c_1, c_2$ be real numbers, and let $N_1,N_2$ be independent normal random variables of expected values $\mu_1,\mu_2$ and variances $\sigma_1^2, \sigma_2^2$, respectively.
    Then $c_1 N_1 + c_2 N_2$ is a normal random variable of expected value $c_1 \mu_1 + c_2 \mu_2$ and variance $c_1^2 \sigma_1^2 + c_2^2 \sigma_2^2$.
\end{lemma}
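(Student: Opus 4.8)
The plan is to prove the lemma via characteristic functions, exploiting their two defining features: they factorize over independent summands, and they determine the distribution uniquely. Recall that a normal random variable $N$ with expected value $\mu$ and variance $\sigma^2$ has characteristic function $\mathbb{E}[\exp(\bm{i}tN)] = \exp(\bm{i}\mu t - \tfrac12\sigma^2 t^2)$ for every $t \in \mathbb{R}$. I would take this as known (it is standard, and the degenerate case $\sigma^2 = 0$, in which $N$ is the constant $\mu$, is subsumed, since then the formula reads $\exp(\bm{i}\mu t)$).

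First I would compute the characteristic function of each scaled summand: for a real constant $c$, the characteristic function of $cN_j$ at $t$ equals the characteristic function of $N_j$ at $ct$, namely $\exp(\bm{i}\mu_j c t - \tfrac12\sigma_j^2 c^2 t^2)$. Next, since $N_1$ and $N_2$ are independent, so are $c_1 N_1$ and $c_2 N_2$, and hence the characteristic function of their sum factorizes as the product of the two characteristic functions. Multiplying,
\begin{equation*}
    \mathbb{E}\!\left[\exp\!\big(\bm{i}t(c_1 N_1 + c_2 N_2)\big)\right]
    = \exp\!\left(\bm{i}(c_1\mu_1 + c_2\mu_2)t - \tfrac12(c_1^2\sigma_1^2 + c_2^2\sigma_2^2)t^2\right),
\end{equation*}
which I recognize as the characteristic function of a normal random variable with expected value $c_1\mu_1 + c_2\mu_2$ and variance $c_1^2\sigma_1^2 + c_2^2\sigma_2^2$.

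Finally, by the uniqueness theorem for characteristic functions, two real random variables sharing the same characteristic function have the same distribution; hence $c_1 N_1 + c_2 N_2$ is normal with the claimed parameters. The values of the expected value and the variance can alternatively be read off directly from linearity of expectation and from $\mathbb{V}[c_1 N_1 + c_2 N_2] = c_1^2\,\mathbb{V}[N_1] + c_2^2\,\mathbb{V}[N_2]$ (using independence for the vanishing of the covariance), so the genuine content of the lemma is only the \emph{normality}, which the characteristic-function computation delivers at once. There is no serious obstacle here: the only point requiring a little care is the degenerate situation in which some $\sigma_j^2$ or $c_j$ vanishes, but the characteristic-function formula remains valid in those cases and the argument goes through unchanged.
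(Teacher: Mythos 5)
Your proof is correct. Note, though, that the paper does not actually prove this lemma: its ``proof'' consists of the single remark that the fact is well known, together with a citation to a textbook exercise. Your characteristic-function argument is precisely the standard proof behind that citation: since $N_1$ and $N_2$ are independent, the characteristic function of $c_1 N_1 + c_2 N_2$ factorizes as $\exp\!\big(\bm{i}c_1\mu_1 t - \tfrac12 c_1^2\sigma_1^2 t^2\big)\exp\!\big(\bm{i}c_2\mu_2 t - \tfrac12 c_2^2\sigma_2^2 t^2\big)$, and L\'evy's uniqueness theorem then forces the sum to be normal with the claimed parameters. Your attention to the degenerate cases ($c_j = 0$ or $\sigma_j^2 = 0$, where the resulting variance may vanish and the ``normal'' variable is a point mass) is the only place requiring any convention; in the paper's application this never arises, since the lemma is invoked with $c_1 = -1/\sqrt{1 + m/n}$, $c_2 = -1/\sqrt{1 + n/m}$ and independent standard normals, so that $c_1^2 + c_2^2 = 1$ and the output is a genuine standard normal. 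What your approach buys is self-containedness; what the paper's citation buys is brevity, which is reasonable for a classical fact that is ancillary to the main argument.
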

\begin{proof}
    This fact is well known (cf.~\cite[Exercise~2.1.9]{MR2906465}).
\end{proof}

\section{Proof of Theorem~\ref{thm:main}}

Let $m, n, r > 0$ be integers with $r \leq \min(m, n)$.
Let $\mathbf{X} \in \mathbb{F}_q^{m \times r}$ and $\mathbf{Y} \in \mathbb{F}_q^{r \times n}$ be independent random matrices taken with uniform distribution from their respective spaces.

For every $\mathcal{S} \subseteq [r]$ and $\chi_{\mathcal{S}} \in \widehat{\mathbb{F}_q^*}^{|\mathcal{S}|}$, define the complex random variables
\begin{equation}\label{equ:def-X-Y}
    X_{\mathcal{S}, \chi} := \sum_{i = 1}^m \prod_{k \in \mathcal{S}} \chi_k(x_{i,k})
    \quad\text{and}\quad
    Y_{\mathcal{S}, \chi} := \sum_{j = 1}^n \prod_{k \in \mathcal{S}} \chi_k(y_{k,j}) ,
\end{equation}
and also the real random variables
\begin{equation*}
    Z := \sum_{i = 1}^m \prod_{k = 1}^r \big(1 - \chi_0(x_{i,k})\big)
    \quad\text{and}\quad
    W := \sum_{i = 1}^n \prod_{k = 1}^r \big(1 - \chi_0(y_{k,j})\big) ,
\end{equation*}
where $x_{i,j}$ and $y_{i,j}$ denote the entries of $\mathbf{X}$ and $\mathbf{Y}$, respectively.

The next two lemmas provide the expected values of $X_{\mathcal{S}, \chi}$ and $Y_{\mathcal{S}, \chi}$, and the expected values and the variances of $Z$ and $W$.

\begin{lemma}\label{lem:E-and-V-of-X-Y}
    For all $\mathcal{S} \subseteq [r]$ and $\chi_{\mathcal{S}} \in \widehat{\mathbb{F}_q^*}^{|\mathcal{S}|}$, we have that
    \begin{equation*}
        \mathbb{E}[X_{\mathcal{S}, \chi}] = C(\chi_{\mathcal{S}}) \left(1 - \frac1{q}\right)^{|\mathcal{S}|} m
        \quad\text{and}\quad
        \mathbb{E}[Y_{\mathcal{S}, \chi}] = C(\chi_{\mathcal{S}}) \left(1 - \frac1{q}\right)^{|\mathcal{S}|} n ,
    \end{equation*}
    where
    \begin{equation*}
        C(\chi_{\mathcal{S}}) := \mathbbm{1}[\,\chi_k = \chi_0 \text{ for each } k \in \mathcal{S}\,] .
    \end{equation*}
\end{lemma}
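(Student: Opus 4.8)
The plan is to compute the two expected values directly by linearity, exploiting the independence and uniformity of the entries of $\mathbf{X}$ and $\mathbf{Y}$. I will focus on $X_{\mathcal{S}, \chi}$; the computation for $Y_{\mathcal{S}, \chi}$ will be identical with $m$ replaced by $n$.

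First I would apply linearity of expectation to write $\mathbb{E}[X_{\mathcal{S}, \chi}] = \sum_{i=1}^m \mathbb{E}\big[\prod_{k \in \mathcal{S}} \chi_k(x_{i,k})\big]$. Since the entries $x_{i,k}$ of $\mathbf{X}$ are independent and uniformly distributed over $\mathbb{F}_q$, for each fixed $i$ the factors indexed by distinct $k \in \mathcal{S}$ involve distinct entries of $\mathbf{X}$, so the expectation of the product factors as $\prod_{k \in \mathcal{S}} \mathbb{E}[\chi_k(x_{i,k})]$.

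The key step is then to evaluate the single expectation $\mathbb{E}[\chi_k(x_{i,k})] = q^{-1}\sum_{a \in \mathbb{F}_q} \chi_k(a)$. Recalling that each multiplicative character is extended by the convention $\chi_k(0) := 0$, the orthogonality relation~\eqref{equ:ortho-chi-sum-a} gives $\sum_{a \in \mathbb{F}_q}\chi_k(a) = (q-1)\,\mathbbm{1}[\chi_k = \chi_0]$, whence $\mathbb{E}[\chi_k(x_{i,k})] = (1 - q^{-1})\,\mathbbm{1}[\chi_k = \chi_0]$. Taking the product over $k \in \mathcal{S}$ produces the factor $(1 - q^{-1})^{|\mathcal{S}|}$ together with $\prod_{k \in \mathcal{S}}\mathbbm{1}[\chi_k = \chi_0] = C(\chi_{\mathcal{S}})$, and this quantity is independent of $i$. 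Summing the $m$ identical terms yields the claimed value for $\mathbb{E}[X_{\mathcal{S}, \chi}]$, and the argument for $\mathbb{E}[Y_{\mathcal{S}, \chi}]$ is word-for-word the same with $n$ in place of $m$.

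There is no real obstacle here: the computation is entirely routine once independence is used to factor the expectation. The only point requiring a moment's care is the convention $\chi_k(0) = 0$, which ensures that summing $\chi_k$ over all of $\mathbb{F}_q$ coincides with summing over $\mathbb{F}_q^*$, so that~\eqref{equ:ortho-chi-sum-a} applies directly and contributes the factor $1 - q^{-1}$ rather than~$1$.
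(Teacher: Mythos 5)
Your proposal is correct and follows essentially the same route as the paper: linearity of expectation, factoring over the independent uniform entries, and evaluating $\mathbb{E}[\chi_k(x_{i,k})] = (1 - q^{-1})\,\mathbbm{1}[\chi_k = \chi_0]$ via the orthogonality relation~\eqref{equ:ortho-chi-sum-a} together with the convention $\chi_k(0) = 0$. No issues.
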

\begin{proof}
    Fix $\chi \in \widehat{\mathbb{F}_q^*}$ and let $c \in \mathbb{F}_q$ be taken at random with uniform distribution.
    From~\eqref{equ:ortho-chi-sum-a} it follows that
    \begin{equation*}
        \mathbb{E}[\chi(c)] = \frac1{q} \sum_{a \in \mathbb{F}_q} \chi(a) = \left(1 - \frac1{q}\right) \mathbbm{1}[\chi = \chi_0] .
    \end{equation*}
    Consequently, if $c_{\mathcal{S}} \in \mathbb{F}_q^{|\mathcal{S}|}$ is a random tuple taken with uniform distribution, then
    \begin{equation*}
        \mathbb{E}\!\left[\,\prod_{k \in \mathcal{S}} \chi_k(c_k) \right] = \prod_{k \in \mathcal{S}} \mathbb{E}\!\big[\chi_k(c_k)\big] = C(\chi_{\mathcal{S}}) \left(1 - \frac1{q}\right)^{|\mathcal{S}|} .
    \end{equation*}
    At this point, the formulas for the expected values of $X_{\mathcal{S},\chi}$ and $Y_{\mathcal{S},\chi}$ follow by linearity.
\end{proof}

\begin{lemma}\label{lem:E-and-V-of-Z-W}
    We have that
    \begin{equation*}
        \mathbb{E}[Z] = \frac1{q^r} m, \quad \mathbb{V}[Z] = \frac1{q^r}\! \left(1 - \frac1{q^r}\right) m, \quad
        \mathbb{E}[W] = \frac1{q^r} n, \quad \mathbb{V}[W] = \frac1{q^r}\! \left(1 - \frac1{q^r}\right) n .
    \end{equation*}
\end{lemma}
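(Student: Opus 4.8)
The plan is to recognize that $Z$ and $W$ count, respectively, the zero rows of $\mathbf{X}$ and the zero columns of $\mathbf{Y}$, thereby reducing each to a sum of independent Bernoulli indicators whose mean and variance are immediate. First I would use the extension convention $\chi_0(0) := 0$ to note that the trivial multiplicative character satisfies $1 - \chi_0(a) = \mathbbm{1}[a = 0]$ for every $a \in \mathbb{F}_q$. Consequently, for each fixed $i$,
\[
    \prod_{k = 1}^r \big(1 - \chi_0(x_{i,k})\big) = \prod_{k = 1}^r \mathbbm{1}[x_{i,k} = 0] = \mathbbm{1}[x_{i,1} = \cdots = x_{i,r} = 0] ,
\]
which is exactly the indicator, call it $Z_i$, that the $i$-th row of $\mathbf{X}$ is zero, so that $Z = \sum_{i=1}^m Z_i$.

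Next, since $\mathbf{X}$ is uniform over $\mathbb{F}_q^{m \times r}$, its entries are independent and uniformly distributed over $\mathbb{F}_q$; in particular the rows of $\mathbf{X}$ are independent and $\mathbb{P}[Z_i = 1] = q^{-r}$ for each $i$. Hence $Z_1, \dots, Z_m$ are independent and identically distributed Bernoulli random variables of parameter $q^{-r}$, and $Z$ is binomially distributed with parameters $m$ and $q^{-r}$. The formulas $\mathbb{E}[Z] = q^{-r} m$ and $\mathbb{V}[Z] = q^{-r}(1 - q^{-r}) m$ then follow from the standard expressions for the mean and variance of a binomial distribution, the variance computation relying on the independence of the $Z_i$. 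The identical argument applied to the columns of $\mathbf{Y}$, which are independent and uniform over $\mathbb{F}_q^r$, gives the analogous formulas for $W$ with $n$ replacing $m$.

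There is no genuine obstacle here: once the product is identified with a zero-row (respectively zero-column) indicator, the computation is entirely elementary. The only point deserving a moment of care is the extension convention $\chi_0(0) = 0$, which is precisely what turns $1 - \chi_0$ into the indicator function of $\{0\}$ rather than something trivial; without it the product would not collapse to a clean Bernoulli indicator.
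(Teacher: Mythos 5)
Your proof is correct and follows essentially the same route as the paper: the paper's proof consists of exactly the observation that $Z$ and $W$ are binomial random variables with $m$ (respectively $n$) trials and success probability $q^{-r}$, which you have simply spelled out in full detail, including the key point that the convention $\chi_0(0)=0$ makes $1-\chi_0$ the indicator of $\{0\}$ and hence each product an indicator of a zero row (or column).
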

\begin{proof}
    The claim follows easily by noticing that $Z$ and $W$ are binomial random variables of $m$ and $n$ trials, respectively, and probability of success equal to $q^{-r}$.
\end{proof}

We can now prove a formula for $\ct_{\mathcal{A}}(\mathbf{X}\mathbf{Y})$, for every $\mathcal{A} \subseteq \mathbb{F}_q$.

\begin{lemma}\label{lem:other-sums-for-Z-W}
    For every $a \in \mathbb{F}_q$, we have that
    \begin{align}
        \label{equ:other-sum-for-Z}
        \sum_{\mathcal{S} \subseteq [r]} \sum_{\chi_{\mathcal{S}} \in \widehat{\mathbb{F}_q^*}^{|\mathcal{S}|}} \widehat{f_{\mathcal{S}}^{(a)}}(\chi_{\mathcal{S}}) \mathbb{E}[Y_{\mathcal{S}, \chi}] X_{\mathcal{S}, \chi} &= \frac1{q} mn - \gamma_a(q) n Z , \\
        \label{equ:other-sum-for-W}
        \sum_{\mathcal{S} \subseteq [r]} \sum_{\chi_{\mathcal{S}} \in   \widehat{\mathbb{F}_q^*}^{|\mathcal{S}|}} \widehat{f_{\mathcal{S}}^{(a)}}(\chi_{\mathcal{S}}) \mathbb{E}[X_{\mathcal{S}, \chi}] Y_{\mathcal{S}, \chi} &= \frac1{q} mn - \gamma_a(q) m W , \\
        \label{equ:other-sum-for-EX-EY}
        \sum_{\mathcal{S} \subseteq [r]} \sum_{\chi_{\mathcal{S}} \in   \widehat{\mathbb{F}_q^*}^{|\mathcal{S}|}} \widehat{f_{\mathcal{S}}^{(a)}}(\chi_{\mathcal{S}}) \mathbb{E}[X_{\mathcal{S}, \chi}] \mathbb{E}[Y_{\mathcal{S}, \chi}] &= \left(\frac1{q} - \frac{\gamma_a(q)}{q^r}  \right) mn .
    \end{align}
\end{lemma}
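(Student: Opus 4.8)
The plan is to exploit the fact that the expectation factor forces the character tuple to be trivial, which collapses the inner sum, and then to recognize the resulting alternating subset sum as $Z$ (respectively $W$). First, for~\eqref{equ:other-sum-for-Z} I would invoke Lemma~\ref{lem:E-and-V-of-X-Y} to write $\mathbb{E}[Y_{\mathcal{S},\chi}] = C(\chi_{\mathcal{S}})(1-1/q)^{|\mathcal{S}|}n$, where $C(\chi_{\mathcal{S}}) = \mathbbm{1}[\chi_k = \chi_0 \text{ for each } k \in \mathcal{S}]$. Thus every summand vanishes unless $\chi_{\mathcal{S}} = \chi_{\bm{0}}$, so the inner sum over $\chi_{\mathcal{S}}$ reduces to its single term at $\chi_{\mathcal{S}} = \chi_{\bm{0}}$. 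Into that term I would substitute the value of $\widehat{f_{\mathcal{S}}^{(a)}}(\chi_{\bm{0}})$ supplied by Lemma~\ref{lem:fourier-transform-of-fz-S}, together with $\mathbb{E}[Y_{\mathcal{S},\chi_{\bm{0}}}] = (1-1/q)^{|\mathcal{S}|}n$.

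The key algebraic simplification is the identity $\bigl(1/q - 1\bigr)^{-|\mathcal{S}|}(1-1/q)^{|\mathcal{S}|} = (-1)^{|\mathcal{S}|}$, which comes from $1/q - 1 = -(1-1/q)$. Using it, the $\mathcal{S}$-th summand collapses to $\bigl(\tfrac{1}{q}\mathbbm{1}[\mathcal{S} = \varnothing] - \gamma_a(q)(-1)^{|\mathcal{S}|}\bigr)\,n\,X_{\mathcal{S},\chi_{\bm{0}}}$. Summing over $\mathcal{S} \subseteq [r]$, the indicator picks out only $\mathcal{S} = \varnothing$, where $X_{\varnothing,\chi_{\bm{0}}} = m$ (empty product), contributing $\tfrac{1}{q}mn$; the remaining contribution is $-\gamma_a(q)\,n \sum_{\mathcal{S} \subseteq [r]}(-1)^{|\mathcal{S}|}X_{\mathcal{S},\chi_{\bm{0}}}$.

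The crux of the argument—and the step I expect to carry the most insight—is identifying this alternating subset sum with $Z$. Since $X_{\mathcal{S},\chi_{\bm{0}}} = \sum_{i=1}^m \prod_{k \in \mathcal{S}}\chi_0(x_{i,k})$, interchanging the order of summation gives $\sum_{i=1}^m \sum_{\mathcal{S} \subseteq [r]}(-1)^{|\mathcal{S}|}\prod_{k \in \mathcal{S}}\chi_0(x_{i,k}) = \sum_{i=1}^m \prod_{k=1}^r\bigl(1 - \chi_0(x_{i,k})\bigr) = Z$, where the inner product is obtained by expanding each factor $1 - \chi_0(x_{i,k})$; this is exactly the \emph{Möbius-type} recombination run in reverse. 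This establishes~\eqref{equ:other-sum-for-Z}.

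Formula~\eqref{equ:other-sum-for-W} then follows by the symmetry interchanging $\mathbf{X}$ with $\mathbf{Y}$ and $m$ with $n$, which swaps the roles of $Z$ and $W$. Finally, for~\eqref{equ:other-sum-for-EX-EY} I would simply take expectations on both sides of~\eqref{equ:other-sum-for-Z}: the left-hand side becomes the left-hand side of~\eqref{equ:other-sum-for-EX-EY}, while using $\mathbb{E}[Z] = m/q^r$ from Lemma~\ref{lem:E-and-V-of-Z-W} turns the right-hand side $\tfrac{1}{q}mn - \gamma_a(q)\,n\,\mathbb{E}[Z]$ into $\bigl(\tfrac{1}{q} - \gamma_a(q)/q^r\bigr)mn$. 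The only point requiring care throughout is the bookkeeping of the signs and powers of $1 - 1/q$; everything else is routine.
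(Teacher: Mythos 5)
Your proposal is correct and follows essentially the same route as the paper's proof: collapsing the character sum via Lemma~\ref{lem:E-and-V-of-X-Y}, substituting Lemma~\ref{lem:fourier-transform-of-fz-S}, recognizing the alternating subset sum $\sum_{\mathcal{S} \subseteq [r]}(-1)^{|\mathcal{S}|}X_{\mathcal{S},\chi_{\bm{0}}}$ as $Z$ by expanding $\prod_{k=1}^r\bigl(1-\chi_0(x_{i,k})\bigr)$, handling~\eqref{equ:other-sum-for-W} by symmetry, and deducing~\eqref{equ:other-sum-for-EX-EY} by taking expectations of~\eqref{equ:other-sum-for-Z} together with Lemma~\ref{lem:E-and-V-of-Z-W}. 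All steps, including the sign identity $(1/q-1)^{-|\mathcal{S}|}(1-1/q)^{|\mathcal{S}|} = (-1)^{|\mathcal{S}|}$ and the empty-product convention $X_{\varnothing,\chi_{\bm{0}}} = m$, check out.
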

\begin{proof}
    From Lemma~\ref{lem:E-and-V-of-X-Y} and Lemma~\ref{lem:fourier-transform-of-fz-S}, it follows that
    \begin{align*}
        \sum_{\mathcal{S} \subseteq [r]} & \sum_{\chi_{\mathcal{S}} \in \widehat{\mathbb{F}_q^*}^{|\mathcal{S}|}} \widehat{f_{\mathcal{S}}^{(a)}}(\chi_{\mathcal{S}}) \mathbb{E}[Y_{\mathcal{S}, \chi}] X_{\mathcal{S}, \chi}
        = n \sum_{\mathcal{S} \subseteq [r]} \widehat{f_{\mathcal{S}}^{(a)}}(\chi_{\bm{0}}) \left(1 - \frac1{q}\right)^{|\mathcal{S}|} X_{\mathcal{S}, \chi_{\bm{0}}} \\
        &= n \sum_{\mathcal{S} \subseteq [r]} \left(\frac{\mathbbm{1}[\mathcal{S} = \varnothing]}{q} - \gamma_a(q) \left(\frac1{q} - 1\right)^{-|\mathcal{S}|}\right) \left(1 - \frac1{q}\right)^{|\mathcal{S}|} X_{\mathcal{S}, \chi_{\bm{0}}} \\
        &= \frac1{q} mn - \gamma_a(q) n \sum_{\mathcal{S} \subseteq [r]} (-1)^{|\mathcal{S}|} X_{\mathcal{S}, \chi_{\bm{0}}} ,
    \end{align*}
    since $X_{\varnothing, \chi_{\bm{0}}} = m$.
    Furthermore, from~\eqref{equ:def-X-Y}, we have that
    \begin{align*}
        \sum_{\mathcal{S} \subseteq [r]} (-1)^{|\mathcal{S}|} X_{\mathcal{S}, \chi_{\bm{0}}}
        &= \sum_{\mathcal{S} \subseteq [r]} (-1)^{|\mathcal{S}|} \sum_{i = 1}^m \prod_{k \in \mathcal{S}} \chi_0(x_{i,k})
        = \sum_{i = 1}^m \sum_{\mathcal{S} \subseteq [r]} \prod_{k \in \mathcal{S}} \big({-}\chi_0(x_{i,k})\big) \\
        &= \sum_{i = 1}^m \prod_{k = 1}^r \big(1 - \chi_0(x_{i,k})\big) = Z ,
    \end{align*}
    and~\eqref{equ:other-sum-for-Z} follows.
    The proof of~\eqref{equ:other-sum-for-W} proceeds similarly.

    Finally, taking the expected value of both sides of~\eqref{equ:other-sum-for-Z}, and employing Lemma~\ref{lem:E-and-V-of-Z-W}, we obtain~\eqref{equ:other-sum-for-EX-EY}.
\end{proof}

\begin{lemma}\label{lem:ctz-minus-mean}
    For every $\mathcal{A} \subseteq \mathbb{F}_q$, we have that
    \begin{align*}
        \ct_{\mathcal{A}}(\mathbf{X}\mathbf{Y}) &= \mu_{\mathcal{A}}(q, m, n) + \sum_{\mathcal{S} \subseteq [r]}
        \sum_{\chi_{\mathcal{S}} \in \widehat{\mathbb{F}_q^*}^{|\mathcal{S}|}} \sum_{a \in {\mathcal{A}}} \widehat{f_{\mathcal{S}}^{(a)}} (\chi_{\mathcal{S}}) \big(X_{\mathcal{S}, \chi} - \mathbb{E}[X_{\mathcal{S}, \chi}]\big) \big(Y_{\mathcal{S}, \chi} - \mathbb{E}[Y_{\mathcal{S}, \chi}]\big) \\
        &\phantom{mm}- \gamma_{\mathcal{A}}(q) n \big(Z - \mathbb{E}[Z]\big) - \gamma_{\mathcal{A}}(q) m \big(W - \mathbb{E}[W]\big) .
    \end{align*}
\end{lemma}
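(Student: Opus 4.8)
The plan is to begin from the definition
\[
    \ct_{\mathcal{A}}(\mathbf{X}\mathbf{Y}) = \sum_{i = 1}^m \sum_{j = 1}^n \mathbbm{1}\!\left[\,\sum_{k = 1}^r x_{i,k} y_{k,j} \in \mathcal{A}\right] = \sum_{i = 1}^m \sum_{j = 1}^n \sum_{a \in \mathcal{A}} f^{(a)}(x_{i,1} y_{1,j}, \dots, x_{i,r} y_{r,j}) ,
\]
where the second equality holds because the $(i,j)$-entry of $\mathbf{X}\mathbf{Y}$ equals $\sum_{k} x_{i,k} y_{k,j}$ and $f^{(a)}$ is precisely the indicator that a sum of $r$ elements of $\mathbb{F}_q$ equals $a$. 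Applying the M\"obius--Fourier inversion formula of Lemma~\ref{lem:mobius-fourier-inversion} to each $f^{(a)}$ rewrites every summand as $\sum_{\mathcal{S} \subseteq [r]} \sum_{\chi_{\mathcal{S}}} \widehat{f^{(a)}_{\mathcal{S}}}(\chi_{\mathcal{S}}) \prod_{k \in \mathcal{S}} \chi_k(x_{i,k} y_{k,j})$.

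The key step is to use the multiplicativity $\chi_k(x_{i,k} y_{k,j}) = \chi_k(x_{i,k})\,\chi_k(y_{k,j})$, which holds for all field elements (including zero, since $\chi_k(0) := 0$), to factor each product into a part depending only on $\mathbf{X}$ and a part depending only on $\mathbf{Y}$. After exchanging the order of summation so that $\sum_i$ and $\sum_j$ act on their respective factors, the inner sums are exactly $X_{\mathcal{S},\chi}$ and $Y_{\mathcal{S},\chi}$ as defined in~\eqref{equ:def-X-Y}, whence
\[
    \ct_{\mathcal{A}}(\mathbf{X}\mathbf{Y}) = \sum_{\mathcal{S} \subseteq [r]} \sum_{\chi_{\mathcal{S}} \in \widehat{\mathbb{F}_q^*}^{|\mathcal{S}|}} \sum_{a \in \mathcal{A}} \widehat{f^{(a)}_{\mathcal{S}}}(\chi_{\mathcal{S}}) \, X_{\mathcal{S},\chi}\, Y_{\mathcal{S},\chi} .
\]

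To reach the stated form I would then split each product by the elementary identity $X_{\mathcal{S},\chi} Y_{\mathcal{S},\chi} = (X_{\mathcal{S},\chi} - \mathbb{E}[X_{\mathcal{S},\chi}])(Y_{\mathcal{S},\chi} - \mathbb{E}[Y_{\mathcal{S},\chi}]) + \mathbb{E}[X_{\mathcal{S},\chi}] Y_{\mathcal{S},\chi} + X_{\mathcal{S},\chi} \mathbb{E}[Y_{\mathcal{S},\chi}] - \mathbb{E}[X_{\mathcal{S},\chi}] \mathbb{E}[Y_{\mathcal{S},\chi}]$. The centered-product term reproduces verbatim the displayed double sum in the statement. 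Summing the three identities~\eqref{equ:other-sum-for-Z},~\eqref{equ:other-sum-for-W}, and~\eqref{equ:other-sum-for-EX-EY} of Lemma~\ref{lem:other-sums-for-Z-W} over $a \in \mathcal{A}$ (recalling $\gamma_{\mathcal{A}}(q) = \sum_{a \in \mathcal{A}} \gamma_a(q)$) evaluates the remaining three terms in closed form as expressions in $Z$, $W$, and $mn$. Substituting $\mathbb{E}[Z] = q^{-r} m$ and $\mathbb{E}[W] = q^{-r} n$ from Lemma~\ref{lem:E-and-V-of-Z-W} then centers the $Z$- and $W$-contributions and collects the pure $mn$-terms into $\mu_{\mathcal{A}}(q,m,n)$.

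The substantive point is the factorization via multiplicativity of the characters through zero entries: this is exactly what decouples $\mathbf{X}$ from $\mathbf{Y}$ and is the reason multiplicative rather than additive characters are used. The only remaining care is bookkeeping---one must check that the three non-centered contributions combine so that the spurious $q^{-r} mn$ terms coming from $\mathbb{E}[Z]$, $\mathbb{E}[W]$, and~\eqref{equ:other-sum-for-EX-EY} cancel correctly against the definition of $\mu_{\mathcal{A}}$, leaving precisely the centered $Z$ and $W$ terms in the statement.
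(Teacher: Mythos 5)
Your proposal is correct and follows essentially the same route as the paper: Möbius--Fourier inversion of $f^{(a)}$, factorization of the multiplicative characters to decouple $\mathbf{X}$ from $\mathbf{Y}$ into $X_{\mathcal{S},\chi} Y_{\mathcal{S},\chi}$, the same centering identity, and then Lemma~\ref{lem:other-sums-for-Z-W} together with Lemma~\ref{lem:E-and-V-of-Z-W} for the bookkeeping. The only cosmetic difference is that you carry the sum over $a \in \mathcal{A}$ throughout, whereas the paper works with a single $a$ and sums at the end.
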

\begin{proof}
    Let $a \in \mathbb{F}_q$.
    From Lemma~\ref{lem:mobius-fourier-inversion} and~\eqref{equ:def-X-Y}, we have that
    \begin{align*}
        \ct_{\{a\}}\!(\mathbf{X}\mathbf{Y})
        &= \sum_{i = 1}^m \sum_{j = 1}^n \mathbbm{1}\!\!\left[\,\sum_{k=1}^r x_{i,k} y_{k,j} = a\right]
        = \sum_{i = 1}^m \sum_{j = 1}^n f^{(a)}(x_{i,1} y_{1,j}, \dots, x_{i,r} y_{r,j}) \\
        &= \sum_{i = 1}^m \sum_{j = 1}^n \sum_{\mathcal{S} \subseteq [r]}   \sum_{\chi_{\mathcal{S}} \in \widehat{\mathbb{F}_q^*}^{|\mathcal{S}|}} \widehat{f_{\mathcal{S}}^{(a)}}(\chi_{\mathcal{S}}) \prod_{k \in \mathcal{S}} \chi_k(x_{i,k} y_{k,j}) \\
        &= \sum_{\mathcal{S} \subseteq [r]} \sum_{\chi_{\mathcal{S}} \in    \widehat{\mathbb{F}_q^*}^{|\mathcal{S}|}} \widehat{f_{\mathcal{S}}^{(a)}}(\chi_{\mathcal{S}}) \left( \sum_{i = 1}^m \prod_{k \in \mathcal{S}} \chi_k(x_{i,k}) \right) \left(\sum_{j = 1}^n \prod_{k \in \mathcal{S}} \chi_k(y_{k,j}) \right) \\
        &= \sum_{\mathcal{S} \subseteq [r]} \sum_{\chi_{\mathcal{S}} \in    \widehat{\mathbb{F}_q^*}^{|\mathcal{S}|}} \widehat{f_{\mathcal{S}}^{(a)}}(\chi_{\mathcal{S}}) X_{\mathcal{S},\chi} Y_{\mathcal{S},\chi} .
    \end{align*}
    Then, from the identity
    \begin{align*}
        X_{\mathcal{S}, \chi} Y_{\mathcal{S}, \chi} &= \big(X_{\mathcal{S}, \chi} - \mathbb{E}[X_{\mathcal{S}, \chi}]\big) \big(Y_{\mathcal{S}, \chi} - \mathbb{E}[Y_{\mathcal{S}, \chi}]\big) \\
        &\phantom{mmmm}+ \mathbb{E}[Y_{\mathcal{S}, \chi}] X_{\mathcal{S}, \chi} + \mathbb{E}[X_{\mathcal{S}, \chi}] Y_{\mathcal{S}, \chi} - \mathbb{E}[Y_{\mathcal{S}, \chi}] \mathbb{E}[X_{\mathcal{S}, \chi}] ,
    \end{align*}
    we get that
    \begin{align*}
        \ct_{\{a\}}\!(\mathbf{X}\mathbf{Y}) &= \sum_{\mathcal{S} \subseteq [r]} \sum_{\chi_{\mathcal{S}} \in \widehat{\mathbb{F}_q^*}^{|\mathcal{S}|}} \widehat{f_{\mathcal{S}}^{(a)}}(\chi_{\mathcal{S}}) \big(X_{\mathcal{S}, \chi} - \mathbb{E}[X_{\mathcal{S}, \chi}]\big) \big(Y_{\mathcal{S}, \chi} - \mathbb{E}[Y_{\mathcal{S}, \chi}]\big) \\
        &\phantom{mm}+ \sum_{\mathcal{S} \subseteq [r]} \sum_{\chi_{\mathcal{S}} \in \widehat{\mathbb{F}_q^*}^{|\mathcal{S}|}} \widehat{f_{\mathcal{S}}^{(a)}}(\chi_{\mathcal{S}}) \mathbb{E}[Y_{\mathcal{S}, \chi}] X_{\mathcal{S}, \chi}
        + \sum_{\mathcal{S} \subseteq [r]} \sum_{\chi_{\mathcal{S}} \in \widehat{\mathbb{F}_q^*}^{|\mathcal{S}|}} \widehat{f_{\mathcal{S}}^{(a)}}(\chi_{\mathcal{S}}) \mathbb{E}[X_{\mathcal{S}, \chi}] Y_{\mathcal{S}, \chi} \\
        &\phantom{mm}- \sum_{\mathcal{S} \subseteq [r]} \sum_{\chi_{\mathcal{S}} \in \widehat{\mathbb{F}_q^*}^{|\mathcal{S}|}} \widehat{f_{\mathcal{S}}^{(a)}}(\chi_{\mathcal{S}}) \mathbb{E}[Y_{\mathcal{S}, \chi}] \mathbb{E}[X_{\mathcal{S}, \chi}] .
    \end{align*}
    At this point, the claim follows easily by applying Lemma~\ref{lem:other-sums-for-Z-W} and Lemma~\ref{lem:E-and-V-of-Z-W}, and by summing over all $a \in \mathcal{A}$.
\end{proof}

Fix a nonempty $\mathcal{A} \subsetneq \mathbb{F}_q$ and, for the sake of brevity, let
\begin{equation*}
	\widetilde{\ct}_{\mathcal{A}}(\mathbf{N}) := \frac{\raisebox{2pt}{$\ct_{\mathcal{A}}(\mathbf{N}) - \mu_{\mathcal{A}}(q, m, n)$}}{\raisebox{-6pt}{$\sqrt{\strut\sigma_{\mathcal{A}}^2(q, m, n)}$}}
\end{equation*}
for every $\mathbf{N} \in \mathbb{F}_q^{m \times n}$.
Moreover, hereafter, let $m, n \to +\infty$.

Note that each of the complex random variables $X_{\mathcal{S}, \chi}$ and $Y_{\mathcal{S}, \chi}$ is the sum of independent identically distributed random variables with finite covariance matrices.
Therefore, by the Central Limit Theorem in $\mathbb{R}^2$ (see, e.g., \cite[Theorem~3.9.6]{MR3930614}), we have that $\big(X_{\mathcal{S}, \chi} - \mathbb{E}[X_{\mathcal{S}, \chi}]\big) \!/\! \sqrt{m}$ and $\big(Y_{\mathcal{S}, \chi} - \mathbb{E}[Y_{\mathcal{S}, \chi}]\big) \!/\! \sqrt{n}$ converge in distribution to some complex normal random variables, which we call $X_{\mathcal{S}, \chi}^\prime$ and $Y_{\mathcal{S}, \chi}^\prime$, respectively.

Similarly, each of the real random variables $Z$ and $W$ is the sum of independent identically distributed random variables.
Hence, it follows from the Central Limit Theorem (in $\mathbb{R}$) that $\big(Z - \mathbb{E}[Z]\big) \!/\! \sqrt{\mathbb{V}[Z]}$ and $\big(W - \mathbb{E}[W]\big) \!/\! \sqrt{\mathbb{V}[W]}$ converge in distribution to standard normal random variables, which we call $Z^\prime$ and $W^\prime$, respectively.

From Lemma~\ref{lem:ctz-minus-mean} and Lemma~\ref{lem:E-and-V-of-Z-W}, it follows that
\begin{equation}\label{equ:main1}
	\widetilde{\ct}_{\mathcal{A}}(\mathbf{X}\mathbf{Y})
    = \sum_{\mathcal{S} \subseteq [r]}
    \sum_{\chi_{\mathcal{S}} \in \widehat{\mathbb{F}_q^*}^{|\mathcal{S}|}} \frac{c_{\mathcal{A}, \mathcal{S}, \chi}(q)}{\sqrt{m + n}} \, X_{\mathcal{S}, \chi}^\prime Y_{\mathcal{S}, \chi}^\prime
    - \frac{Z^\prime}{\sqrt{1 + m/n}} - \frac{W^\prime}{\sqrt{1 + n/m}} ,
\end{equation}
where each $c_{\mathcal{A}, \mathcal{S}, \chi}(q)$ depends only on $\mathcal{A}$, $\mathcal{S}$, $\chi_{\mathcal{S}}$, $q$, $r$, and not on $m$ and $n$.

Since $X_{\mathcal{S}, \chi}^\prime$ and $Y_{\mathcal{S}, \chi}^\prime$ are independent, their product converges in distribution to $\widetilde{X}_{\mathcal{S}, \chi}\widetilde{Y}_{\mathcal{S}, \chi}$.
Therefore, from Lemma~\ref{lem:slutsky}\ref{ite:slutsky:ii}, we get that each term of the double sum in~\eqref{equ:main1} converges in distribution to the constant $0$.
Consequently, by Lemma~\ref{lem:slutsky}\ref{ite:slutsky:i}, the double sum in~\eqref{equ:main1} converges in distribution to the constant $0$.

Since $\widetilde{Z}$ and $\widetilde{W}$ are independent, from Lemma~\ref{lem:linear-comb-of-indep-normals} it follows that
\begin{equation*}
    U := -\frac{\widetilde{Z}}{\sqrt{1 + m/n}} - \frac{\widetilde{W}}{\sqrt{1 + n/m}}
\end{equation*}
is a standard normal random variable.

Moreover, from $Z^\prime \xrightarrow{d} \widetilde{Z}$, $W^\prime \xrightarrow{d} \widetilde{W}$, and the fact that $1/\!\sqrt{1 + m/n}$ and $1/\!\sqrt{1 + n/m}$ belong to $(0,1)$, we get easily that
\begin{equation*}
    \frac{\widetilde{Z} - Z^\prime}{\sqrt{1 + m/n}} \xrightarrow{d} 0 \quad\text{and}\quad \frac{\widetilde{W} - W^\prime}{\sqrt{1 + n/m}} \xrightarrow{d} 0 .
\end{equation*}
Therefore, Lemma~\ref{lem:slutsky}\ref{ite:slutsky:i} yields that
\begin{equation*}
    -\frac{Z^\prime}{\sqrt{1 + m/n}} - \frac{W^\prime}{\sqrt{1 + n/m}} = U + \frac{\widetilde{Z} - Z^\prime}{\sqrt{1 + m/n}} + \frac{\widetilde{W} - W^\prime}{\sqrt{1 + n/m}} \xrightarrow{d} U .
\end{equation*}
From a last application of Lemma~\ref{lem:slutsky}\ref{ite:slutsky:i} we get that $\widetilde{\ct}_{\mathcal{A}}(\mathbf{X}\mathbf{Y})$ converges in distribution to $U$.

Let $\mathbf{M}$ be a random matrix taken with uniform distribution from $\mathbb{F}_q^{m \times n, r}$.
Thanks to Lemma~\ref{lem:same-as-XY}, for every real number $t$, we have that
\begin{align*}
	&\big|\mathbb{P}[\widetilde{\ct}_{\mathcal{A}}(\mathbf{M}) \leq t ]
	- \mathbb{P}[\widetilde{\ct}_{\mathcal{A}}(\mathbf{X}\mathbf{Y}) \leq t ] \big|
	= \Big|\sum_{\substack{\mathbf{N} \in \mathbb{F}_q^{m \times n} \\ \widetilde{\ct}_{\mathcal{A}}(\mathbf{N}) \leq t}} \big(\mathbb{P}[\mathbf{M} = \mathbf{N}] - \mathbb{P}[\mathbf{X}\mathbf{Y} = \mathbf{N}]\big) \Big| \\
	&\phantom{mmm}\leq \sum_{\mathbf{N} \in \mathbb{F}_q^{m \times n}} \big|\mathbb{P}[\mathbf{X}\mathbf{Y} = \mathbf{N}] - \mathbb{P}[\mathbf{M} = \mathbf{N}] \big| \to 0 .
\end{align*}
Consequently, we get that $\widetilde{\ct}_{\mathcal{A}}(\mathbf{M})$ and $\widetilde{\ct}_{\mathcal{A}}(\mathbf{X}\mathbf{Y})$ have the same limiting distribution (if it exists).
Since we already proved that $\widetilde{\ct}_{\mathcal{A}}(\mathbf{X}\mathbf{Y})$ converges in distribution to a standard normal random variable, we get that $\widetilde{\ct}_{\mathcal{A}}(\mathbf{M})$ also converges in distribution to a standard normal random variable.

The proof of Theorem~\ref{thm:main} is complete.

\begin{remark}\label{rmk:r-to-oo}
    A crucial part of the proof is the fact that, since $r$ is fixed, the double sum in~\eqref{equ:main1} has a fixed number of terms, and so it is possible to prove that it converges in distribution to the constant $0$ without having to closely inspect its terms.
    If one let $r \to +\infty$, in a way controlled by $m$ and $n$, then it seems likely that understanding the behavior of $\widetilde{\ct}_{\mathcal{A}}(\mathbf{X}\mathbf{Y})$ would require a more detailed study of the terms of the double sum in~\eqref{equ:main1}, since the number of such terms grows with $r$.
\end{remark}

\bibliographystyle{amsplain-no-bysame}
\bibliography{temp}

\end{document}